\DeclareMathOperator{\Diag}{Diag}
\DeclareMathOperator{\coker}{coker}
\DeclareMathOperator{\imagen}{im}
\DeclareMathOperator{\tr}{tr}
\DeclareMathOperator{\adj}{adj}
\DeclareMathOperator{\HH}{\mathrm{H}}
\DeclareMathOperator{\SL}{\mathrm{SL}}
\newcommand{\id}{\mathrm{id}}
\newtheoremstyle{mydef}
{}		
{}		
{}		
{}		
{\scshape}	
{. }		
{ }		
{\thmname{#1}\thmnumber{ #2}\thmnote{ #3}}	
\theoremstyle{thmstyleone}%
\newtheorem{theorem}{Theorem}[section]
\newtheorem*{theorem*}{Theorem}
\newtheorem{lemma}{Lemma}[section]
\newtheorem{notation}{Notation}[section]
\newtheorem{example}{Example}[section]%
\newtheorem{definition}{Definition}[section]
\newtheorem*{conjecture*}{Conjecture}
\newtheorem*{KKP}{KKP Conjecture}
\theoremstyle{thmstyletwo}%
\theoremstyle{thmstylethree}%
\newtheorem{remark}{Remark}[section]%
\begin{document}
	
\title[KKP conjecture for minimal  adjoint orbits]{The Katzarkov--Kontsevich--Pantev conjecture for minimal  adjoint orbits}
	

\author[1]{\fnm{Edoardo} \sur{ Ballico}}\email{ballico@science.unitn.it}

\author*[2]{\fnm{Elizabeth} \sur{Gasparim}}\email{etgasparim@gmail.com}

\author[3]{\fnm{Francisco} \sur{Rubilar}}\email{francisco.rubilar@ucsc.cl}

\author[4]{\fnm{Luiz A.} \sur{B. San Martin}}\email{smartin@ime.unicamp.br}

\affil[1]{\orgdiv{Dept. Mathematics}, \orgname{Univ. of Trento}, \orgaddress{\street{I-38050}, \city{Povo}, \postcode{38123}, \state{Trento}, \country{Italy}}}

\affil[2*]{\orgdiv{Dept. Mathematics}, \orgname{Univ. Cat\'olica del Norte}, \orgaddress{\street{Angamos 610}, \postcode{1270236}, \state{Antofagasta}, \country{Chile}}}

\affil[3]{\orgname{Univ. Cat\'olica de la Sant\'isima Concepci\'on}, \orgaddress{\street{Arauco 449}, \postcode{3820572}, \state{Chill\'an}, \country{Chile}}}

\affil[4]{\orgdiv{Dept. Mathematics}, \orgname{Univ. Estadual de Campinas}, \orgaddress{\street{Cidade Universit\'aria}, \city{Campinas}, \postcode{13083-856}, \country{Brazil}}}
 	
	
	\abstract{We consider  minimal semisimple 
		adjoint orbits as  Landau--Ginzburg models and prove that they satisfy the conjecture of  Katzarkov--Kontsevich--Pantev 
	 about new Hodge theoretical  invariants.}
	

		\keywords{Hodge structures, adjoint orbit, tame compactification, symplectic Lefschetz fibration.}
	
	
	\pacs[MSC]{58A14, 14J33; 17B08, 32M10}
	
	\maketitle
	\newpage
	
	\section{KKP conjecture and our result}

	The Katzarkov--Kontsevich--Pantev  (KKP) conjecture is a numerical prediction 
	expected to follow from the Homological Mirror Symmetry conjecture of M. Kontsevich, a master conjecture predicting a wide range of 
	categorical equivalences which so far have been  established 
	in only a  few cases. 
	
	In  \cite{KKP}  three types of new Hodge theoretical invariants were defined:
	$$
	f^{p,q}(Y,w),\ \ h^{p,q}(Y,w),\ \ i^{p,q}(Y,w)
	$$
	for tamely compactifiable  Landau--Ginzburg (LG) models $w\colon Y\to \mathbb C$;
	we recall their definitions  in section \ref{invariants}.
	 Katzarkov--Kontsevich--Pantev proved in \cite{KKP}  that these numbers satisfy the identities
	\begin{equation}\label{sums}
		h^m(Y,Y_b;\mathbb C)=\sum _{p+q=m}i^{p,q}(Y,w)=\sum _{p+q=m}h^{p,q}(Y,w)=\sum _{p+q=m}f^{p,q}(Y,w)
	\end{equation}
	where $Y_b$ is a smooth fibre of $w$, and  conjectured the equality of the 3 invariants.
\begin{KKP}
\cite{KKP, LP} Assume that $(Y,w)$ is tamely compactified  Landau--Ginzburg. Then for every $p,q$ there are  equalities
		\begin{equation*}\label{equat-of-indiv-hodge-numbers}
			h^{p,q}(Y,w)=f^{p,q}(Y,w)=i^{p,q}(Y,w) .
		\end{equation*}
\end{KKP}
For  $Y$  a specific rational surface with a map $w\colon Y\to \mathbb C$ such that the generic fibre is an elliptic curve, 
	 Lunts and Przyjalkowski \cite{LP} proved the equality $f^{p,q}(Y,w)=h^{p,q}(Y,w)$ and gave an example  where~$i^{p,q}(Y,w)\neq h^{p,q}(Y,w)$. 	 Shamoto \cite{Sh} gave sufficient conditions for a tamely compactifiable LG models to satisfy $f^{p,q}(Y,w)=h^{p,q}(Y,w)$.
	 There remains open the  question of 
	what LG models satisfy this KKP conjecture. 	
	The goal of this paper is to provide a family of examples of LG models coming from Lie theory that do satisfy the KKP conjecture 
	in the sense that the 3 invariants coincide on weakly tame compactifications. 	\\

 We note that  Katzarkov--Kontsevich--Pantev  posed a second conjecture in \cite{KKP}, namely
 they predicted that if $(Y, w)$ is the LG model mirror to $X$, then $f^{p,q}(Y,w) = h^{p, n-q}(X)$ where $n=\dim X=\dim Y$. This conjecture was previously discussed by Gross--Katzarkov--Ruddat \cite{GKR} in the context of mirror symmetry for varieties of general type.
 Harder \cite{Ha} described how to compute  the Hodge numbers $f^{p,q}(X)$  via classical mixed Hodge  
	 theory, proving $h^{p,q}(X)= f^{3-q,p}(Y,w)$ for a crepant resolution of a Gorenstein Fano threefold $X$ 
	 and its  LG mirror $(Y,w)$. 
  Cheltsov and Przyjalkowski  \cite{CP} showed that this second conjecture  holds  for the Hodge numbers of 
  Fano threefolds and their mirror LG models.\\

 In this work we consider the KKP 
  conjecture  about the equality of the 3 invariants defined by Katzarkov--Kontsevich--Pantev, 
  providing examples where  the 3 numbers coincide for a family of Landau--Ginzburg models $(Y,w)$  admitting
   {\it weakly tame} 
  compactifications $((Z,f),D_Z)$.  We call our compactifications weakly tame because, 
even though the  the divisor $D_Z$  is the sum of the fibre
at infinity $D^v$ and the horizontal divisor at infinity $D^h$, only  multiples of  these divisors give the anti-canonical class,
failing the  strict requirement of multiplicity 1  in Definition \ref{def-3}. The appearance of these multiplicities 
is unavoidable here, because our compactifications $Z$ are Fano.

		We consider the symplectic Lefschetz fibrations constructed in \cite{GGSM1}, defined over semisimple adjoint 
	orbits via the height function from Lie theory. The novelties of the cases that we present are: these adjoint orbits are 
	affine, non-toric, Calabi--Yau varieties, which occur in arbitrarily high dimensions.  Indeed, the minimal semisimple orbit of 
	$\mathfrak{sl}(n+1)$ has dimension $2n$. Thus, we present the first explicit family of arbitrarily large 
	Calabi--Yau varieties where the KKP conjecture 
	holds true in the sense that all 3 invariants   coincide.

	Let $\mathfrak g$ be a complex semisimple Lie algebra with Lie group $G$,  and $\mathfrak h$ the Cartan subalgebra. 
	Consider the adjoint orbit  $\mathcal O(H_0)$  of an element $H_ 0 \in \mathfrak h$, 
	that is, $$\mathcal O(H_0) := \{\mathrm{Ad} (g) H_0, g \in G\}\text{.}$$ Let $H  \in \mathfrak{h}_{\mathbb{R}}$ be a regular element, and 
	$\langle \cdot , \cdot\rangle$ the Cartan--Killing form. 
	Then \cite[Thm.\thinspace 2.2]{GGSM1} shows that the height function 
	\begin{equation}\label{height} \begin{array}{rll} f_H\colon  \mathcal O(H_0) & \rightarrow & \mathbb C\cr
			X & \mapsto & \langle H,X \rangle
		\end{array}
	\end{equation}
	gives the orbit the structure of  a symplectic Lefschetz fibration.

	Here we consider the case of    $G=\mathrm{SL}(n+1,\mathbb{C})$ and focus on
	the adjoint orbit passing through  $H_0=\Diag(n,-1,
	\ldots,-1)$. The diffeomorphism type is then $\mathcal{O}(H_0)  \simeq T^* \mathbb P^{n}$.
	Among all choices of elements $H_0\in \mathfrak h \subset {\mathfrak sl}(n+1, \mathbb C) $, the choice 
	$H_0=\Diag(n,-1,\ldots,-1)$ produces the homogeneous manifold of smallest dimension, 
	for this reason we set the following terminology:
	
	\begin{definition}\label{orbit}
		Let $\mathcal O_n$ denote the adjoint orbit of  $H_0=\Diag(n,-1,\ldots,-1)$ in
		${\mathfrak sl}(n+1,\mathbb{C})$, we
		call it the {\it minimal orbit}.
	\end{definition}
	For every $n\geq 2$ 
	and for every choice of regular element $H \in \mathfrak h$, we prove:
	
	\begin{theorem*}\ref{main}
		The LG model $(\mathcal O_n,f_H)$ admits a weakly tame compactification and satisfies the KKP conjecture.
	\end{theorem*}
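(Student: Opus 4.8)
The plan is to produce an explicit smooth projective compactification on which $f_H$ extends, reduce all three invariants to a single Hodge--theoretic vanishing statement, and then play that statement off against the identities \eqref{sums}. For the compactification, write $X\in\mathcal O_n$ as $X=(n+1)P-I$ with $P=vw^{T}/(w^{T}v)$ a rank one projection ($v\in\mathbb C^{n+1}$, $w\in(\mathbb C^{n+1})^{*}$): this identifies $\mathcal O_n$ with the complement of the incidence divisor $\Delta=\{w^{T}v=0\}$ in $\mathbb P^{n}\times(\mathbb P^{n})^{*}$, and under it $f_H$ becomes the rational function $(n+1)\,(w^{T}Hv)/(w^{T}v)$, of bidegree $(0,0)$, with base locus $\{w^{T}v=w^{T}Hv=0\}$ of codimension two. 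Blowing up this locus, and finitely many further smooth centres supported in the boundary, produces a smooth projective $Z$ with a morphism $f\colon Z\to\mathbb P^{1}$ restricting to $f_H$ and a reduced simple normal crossings boundary $D:=Z\setminus\mathcal O_n$, whose decomposition into $f^{-1}(\infty)$ and a horizontal part is arranged as in the definition of a tame compactification. Since $K_{\mathbb P^{n}\times(\mathbb P^{n})^{*}}+\Delta=\mathcal O(-n,-n)$ is anti--ample and this is tracked through the blow-ups, $(Z,D)$ is of Fano type; regularity of the linear function $f_H$ on the closed orbit together with \cite[Thm.\,2.2]{GGSM1} forces all critical points of $f$ into the finite part, so $(Z,f)$ is tame.

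\emph{Topology and reduction.} By \cite[Thm.\,2.2]{GGSM1} the Lefschetz fibration $f_H$ has exactly $n+1$ critical points --- the points of $\mathcal O_n\cap\mathfrak h$, i.e. the $W$--orbit of $H_0$ --- each an ordinary quadratic singularity, so its Lefschetz thimbles have real dimension $2n=\tfrac12\dim_{\mathbb R}\mathcal O_n$. Hence $(\mathcal O_n,Y_b)$ has a CW structure relative to $Y_b$ with $n+1$ cells, all in dimension $2n$, whence $H^{m}(\mathcal O_n,Y_b;\mathbb C)=\mathbb C^{\,n+1}$ for $m=2n$ and $0$ otherwise. Combined with \eqref{sums} this already yields $\sum_{p+q=m}i^{p,q}=\sum_{p+q=m}h^{p,q}=\sum_{p+q=m}f^{p,q}=(n+1)\,\delta_{m,2n}$, so it remains only to show that each of the three families vanishes off the diagonal $p=q$: granting that, the single surviving term is $(n,n)$ and all three equal $n+1$ there, which is the KKP conjecture.

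\emph{Hodge--Tate vanishing.} All the relevant geometry is rational. The orbit $\mathcal O_n$ is a Zariski--locally trivial $\mathbb A^{n}$--bundle over $\mathbb P^{n}$ (first projection of the model above), so $H^{*}(\mathcal O_n)=H^{*}(\mathbb P^{n})$ as mixed Hodge structures; the generic fibre $Y_b$ is the complement of $\Delta$ in the smooth bidegree $(1,1)$ hypersurface $\{w^{T}Hv=\tfrac{b}{n+1}w^{T}v\}$, hence rational with Hodge--Tate cohomology; and the Milnor fibre of each critical point of $f_H$ is an affine quadric, again Hodge--Tate. For $h^{p,q}$ one identifies the twisted de Rham cohomology $\mathbb H^{*}(\mathcal O_n,(\Omega^{\bullet},d+df_H))$ with $H^{*}(\mathcal O_n,Y_b)$; by the long exact sequence in the category of mixed Hodge structures the latter is an extension of weight $2n$ Hodge--Tate structures, and $\mathrm{Ext}^{1}_{\mathrm{MHS}}(\mathbb Q(-n),\mathbb Q(-n))=0$ forces $H^{2n}(\mathcal O_n,Y_b)\cong\mathbb Q(-n)^{\oplus(n+1)}$, so $h^{p,q}=0$ for $p\ne q$. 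For $f^{p,q}$, the log de Rham Hodge numbers of $(Z,D,f)$ of \ref{fpq} are diagonal because $Z$, $D$ and the fibre at infinity are assembled from Hodge--Tate pieces; alternatively one checks the hypotheses of Shamoto's comparison \cite{Sh} --- an $E_1$--degeneration statement, available because $(Z,D)$ is of Fano type --- to get $f^{p,q}=h^{p,q}$ directly. For $i^{p,q}$, the invariant of \ref{ipq} is extracted from the limit mixed Hodge structure of $f$ at $\infty$; since the monodromy acts on the Hodge--Tate module $H^{*}(Y_b)$, that limit structure is again Hodge--Tate, so $i^{p,q}=0$ for $p\ne q$ as well.

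\emph{Main obstacle.} The topology of Step 2 and the Deligne--level computation in Step 3 are routine; the real content is reconciling the a priori distinct Hodge filtrations underlying $h^{p,q}$, $f^{p,q}$ and $i^{p,q}$ --- recall that \cite{LP} exhibits a case with $i^{p,q}\neq h^{p,q}$. In our situation they are forced to agree only because the whole relative cohomology sits in a single bidegree of Tate type, and turning this into a proof will require a careful invocation of Shamoto's degeneration theorem for the equality $f^{p,q}=h^{p,q}$ and an explicit computation of the monodromy and the limit mixed Hodge structure at infinity for the equality $i^{p,q}=h^{p,q}$. A secondary, purely algebro--geometric obstacle is the blow-up bookkeeping of Step 1 needed to make the compactification simple normal crossings, tame, and of Fano type at once, and in particular to control the horizontal part of the boundary.
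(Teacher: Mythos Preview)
Your overall strategy --- build the tame compactification by blowing up $\mathbb P^n\times(\mathbb P^n)^*$ along the base locus, show all three families of invariants are concentrated on the diagonal $p=q$, then invoke the identities \eqref{sums} --- is exactly the route the paper takes. The compactification in the paper (Sections~\ref{LGn}--\ref{holext}) needs only the single blow-up along $\mathcal I=\{w^Tv=w^THv=0\}$; your ``finitely many further smooth centres'' are unnecessary, and the boundary is already normal crossings with the exceptional divisor $E$ as the horizontal part and the strict transform of $F(1,n)$ as the vertical part. Your relative cohomology computation via Lefschetz thimbles is equivalent to Lemma~\ref{middle}, which reaches the same conclusion through the long exact sequence of the pair.

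There are, however, two genuine mismatches in your Step~3 between what you compute and what the invariants actually are. For $h^{p,q}$: you argue that the \emph{Deligne} mixed Hodge structure on $H^{2n}(\mathcal O_n,Y_b)$ is pure of type $(n,n)$, but Definition~\ref{hpq} uses the \emph{monodromy} weight filtration $W_\bullet(N,m)$ associated to $N=\log T$ around $\infty$, which is not a priori the Deligne filtration. Showing the Deligne MHS is Hodge--Tate does not by itself collapse the monodromy weight filtration. The paper instead argues (via Lemma~\ref{tame}, no critical points over the boundary) that the monodromy $T$ at infinity is trivial, so $N=0$ and the filtration has a single step; that is what forces the off-diagonal $h^{p,q}$ to vanish. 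For $i^{p,q}$: you describe it as ``extracted from the limit mixed Hodge structure of $f$ at $\infty$'', but Definition~\ref{ipq} is a sum of Hodge numbers of vanishing-cycle sheaves $\phi_{w-\lambda}\mathbb C_Y$ at the \emph{finite} critical values $\lambda\in\mathbb C$, not at infinity. The correct computation --- carried out in the paper's Lemma~\ref{pneqq} --- is that each of the $n+1$ Morse points contributes a one-dimensional piece of type $(n,n)$ to the vanishing cohomology (the Milnor fibre being an affine quadric, as you did note), giving $i^{n,n}=n+1$ and zero elsewhere. You had the right ingredient in hand but attached it to the wrong invariant; the limit MHS at infinity plays no role in $i^{p,q}$.
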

	
	Since the 3 Hodge theoretical invariants $f^{p,q}(Y,w), h^{p,q}(Y,w), i^{p,q}(Y,w),$
	of our LG models coincide, we may depict diamonds containing their common  values, 
	that we  name   {\it KKP diamonds}. For our examples, 
	these turn out to  have a unique nonzero entry. Thus, our Lie theoretical 
	constructions  give rise to LG models where computations of Hodge theoretical invariants turn out rather simple, 
	which is arguably the best feature  of our choice of   LG models. It is at this moment unknown how KKP invariants 
	behave for 
	LG models on more general adjoint orbits.
	
 \section{Landau--Ginzburg Hodge numbers}\label{invariants}
 This section is  just a summary  of parts of the nicely written text of  \cite{LP}.

 \begin{definition} \label{def-1}\emph{A Landau--Ginzburg model} is a pair $(Y,w)$, where
 	
 	\begin{enumerate}
 		\item $Y$ is a smooth complex quasi-projective variety with trivial canonical bundle $K_Y$;
 		\item $w\colon Y\to \mathbb C$ is a morphism with a compact critical locus $crit(w)\subset Y$.
 	\end{enumerate}
 	
 \end{definition}

 \begin{definition}\cite[Def.\thinspace 3]{LP} \label{def-3}\emph{A tame compactified Landau--Ginzburg model} is the data $((Z,f),D_Z)$, where
 	
 	\begin{enumerate}
 		\item $Z$ is a smooth projective variety and $f\colon Z\to \mathbb P ^1$ is a flat morphism.
 		
 		\item $D_Z=(\cup _i D^h_i)\cup (\cup _jD_j^v)$ is a reduced normal crossings divisor such that
 		
 		\begin{itemize}
 			\item[(i)] $D^v=\cup _jD^v_j$ is a scheme theoretical pole divisor of $f$, i.e. $f^{-1}(\infty)=D^v$. In particular 
			$ord _{D^v_j}(f)=-1$ for all $j$;
 			
 			\item[(ii)] each component $D_i^h$ of $D^h=\cup _iD^h_i$ is smooth and horizontal for $f$, i.e. $f\vert _{D^h_i}$ is a flat morphism;
 			
 			\item[(iii)] The critical locus $crit(f)\subset Z$ does not intersect $D^h$.
 			
 		\end{itemize}

 		\item $D_Z$ is an anticanonical divisor on $Z$.
 		
 		\noindent One says that $((Z,f),D_Z)$ is \emph{a compactification of the Landau--Ginzburg model}
 		$(Y,w)$ if in addition the following holds:
 		
 		\item $Y=Z\setminus D_Z$, $f\vert _Y=w$. 
 	\end{enumerate}
 	
 \end{definition}
 
 Assuming that we are given a Landau--Ginzburg model $(Y,w)$ with a tame compactification~$((Z,f),D_Z)$ as above,
  in\thinspace\cite{KKP} the authors define the  numbers  $i^{p,q}(Y,w)$, $h^{p,q}(Y,w)$, $f^{p,q}(Y,w)$.
   We recall their definitions, and we will use them in our more general situation of weakly tame compatification.
 We denote by $n=\dim Y=\dim Z$ the (complex) dimension of~$Y$ and $Z$. Choose a point $b\in  \mathbb C$ which is near $\infty$ 
 and such that the fibre $Y_b=w^{-1}(b)\subset Y$ is smooth.

 \subsection{$f^{p,q}(Y,w)$}\label{subs-def-fpq} Recall  the logarithmic de Rham complex~$\Omega^\bullet _Z(log\, D_Z)$.
 Namely, $\Omega ^s _Z(log\,D_Z)=\wedge ^s \Omega^1 _Z(log\,D_Z )$ and $\Omega^1 _Z(log\,D_Z )$ is 
 a locally free~$\mbox{$\mathcal O _Z$-module}$ generated locally by
 $$\frac{dz_1}{z_1},\ldots,\frac{dz_k}{z_k},dz_{k+1},\ldots,dz_n$$
 if $z_1\cdot \ldots\cdot z_k=0$ is a local equation of the divisor $D_Z$. Hence in particular $\Omega ^0 _Z(log\,D_Z)=\mathcal O _Z$.
 
 The numbers $f^{p,q}(Y,w)$ are defined using the  subcomplex $\Omega^\bullet _Z(log\,D_Z ,f)\subset \Omega^\bullet _Z(log\,D_Z)$ of~$f$-{\it adapted forms}, which we recall next.
 
 \begin{definition} For each $a\geq 0$ define \emph{a sheaf $\Omega ^a _Z(log\,D_Z ,f)$ of $f$-adapted logarithmic forms} as a subsheaf of $\Omega ^a _Z(log\,D_Z)$ consisting of forms which stay logarithmic after multiplication by $df$. Thus
 	$$\Omega ^a _Z(log\,D_Z ,f)=\{\alpha \in \Omega ^a _Z(log\,D_Z)\ \vert \ df\wedge \alpha \in \Omega ^{a+1} _Z(log\,D_Z )\},$$
 	where one considers $f$ as a meromorphic function on $Z$ and $df$ is viewed as a meromorphic~$\mbox{1-form}$.
 \end{definition}
 
 \begin{definition}\cite[Def.\thinspace 3.1]{KKP} \label{fpq}\emph{The Landau--Ginzburg Hodge numbers} $f^{p,q}(Y,w)$ are defined as follows:
 	$$f^{p,q}(Y,w)=\dim \HH^p(Z,\Omega ^q _Z(log\,D_Z ,f)).$$
 \end{definition}
 
 \subsection{$h^{p,q}(Y,w)$} Let $N\colon V\to V$ be a nilpotent operator on a finite dimensional vector space $V$ such that $N^{m+1}=0$. Such data defines a canonical {\it weight filtration centered at $m$}, $W=W_\bullet (N,m)$ of $V$
 $$0\subset W_0(N,m)\subset W_1(N,m)\subset \ldots\subset W_{2m-1}(N,m)\subset W_{2m}(N,m)=V$$
 with the properties
 \begin{enumerate}
 	\item $N(W_i)\subset W_{i-2}$,
 	\item the map $N^l\colon gr ^{W,m}_{m+l}V\to gr ^{W,m}_{m-l}V$ is an isomorphism for all $l\geq 0$.
 \end{enumerate}
 
 Let $S^1\simeq C\subset \mathbb P ^1$ be a loop passing through the point $b$ that goes once around $\infty$
 in the counter clockwise direction in such a way that there are no singular points of $w$ on or inside~$C$. It gives 
 the monodromy transformation
 \begin{equation*} \label{monodromy-1}T\colon \HH^\bullet (Y_b)\to \HH^\bullet(Y_b)
 \end{equation*}
 with corresponding monodromy transformation on the relative cohomology
 \begin{equation}\label{monodromy-2} T\colon \HH^\bullet (Y,Y_b)\to \HH^\bullet (Y,Y_b).
 \end{equation}
 in such a way that the sequence
 \begin{equation*}\label{long-exact-seq}
 	\ldots\to \HH^m(Y,Y_b)\to \HH^m(Y)\to \HH^m(Y_b)\to \HH^{m+1}(Y,Y_b)\to \ldots
 \end{equation*}
 is $T$-equivariant, where $T$ acts
 trivially on $\HH^\bullet (Y)$. 
 
 Since we assume that the infinite 
 fibre $f^{-1}(\infty)\subset Z$ is a reduced divisor with normal crossings, by Griffiths--Landman--Grothendieck 
 Theorem (see\thinspace\cite{Ka}) the operator $T\colon \HH^m (Y_b)\to \HH^m (Y_b)$ is unipotent and 
 $\left(T-\id\right)^{m+1}=0$. It follows that the transformation \eqref{monodromy-2} is also unipotent.

 \begin{definition}\label{hpq}
\cite[Def.\thinspace 3.2]{KKP} $\label{wrong-def}h^{p,q}(Y,w)=\dim gr _{p}^{W,p+q}\HH^{p+q}(Y,Y_b).$
\end{definition}

 \begin{remark}
 Definition \thinspace\ref{hpq} differs from \cite[Def.\thinspace 8]{LP}
 by the indices of the grading. 
  Denoting by $N$ the logarithm of the transformation \eqref{monodromy-2}, which is assumed to 
  be  a nilpotent operator on $\HH^\bullet (Y,Y_b)$ with $N^{m+1}=0$, \cite{LP} 
call the Landau--Ginzburg model $(Y,w)$  of {\it Fano type} if the operator~$N$ on 
 the relative cohomology $\HH^{n+a} (Y,Y_b)$ has the  properties:
  $N^{n-\vert a\vert }\neq 0, \quad \text{and}\quad 
 		 N^{n-\vert a \vert +1}=0,$
and define:
 	$$h^{p,n-q}(Y,w)=\dim gr _{2(n-p)}^{W,n-a}\HH^{n+p-q}(Y,Y_b)\ \ \text{if $a=p-q\geq 0$},$$
 	$$h^{p,n-q}(Y,w)=\dim gr _{2(n-q)}^{W,n+a}\HH^{n+p-q}(Y,Y_b)\ \ \text{if $a=p-q< 0$}.$$
 \end{remark}
 
 We shall not  worry about  details of the grading here; it is trivial in our examples, 
 since we have no critical points on the  fibre at infinity. 
 Such a possibility is  explicitly 
 mentioned in \cite[Def.\thinspace3.2(iii)]{KKP}.
 
 \subsection{$i^{p,q}(Y,w)$} For each $\lambda \in \mathbb C$ one
 has the corresponding sheaf~$\phi _{w-\lambda}\mathbb C _Y$ of vanishing cycles for the fibre $Y_\lambda$.
 The sheaf $\phi _{w-\lambda}\mathbb C _Y$ is supported on the fibre $Y_\lambda$ and is equal to zero if $\lambda $ 
 is not a critical value of $w$. From the works of Schmid, Steenbrink, 
 and Saito it is classically known that the constructible complex $\phi _{w-\lambda}\mathbb C _Y$ carries a structure of a mixed Hodge 
 module and so its hypercohomology inherits a mixed Hodge structure. For a mixed Hodge module $S$ we will denote by 
 $i^{p,q}S$ the $(p,q)$ Hodge numbers of the $p+q$ weight graded piece $gr ^W_{p+q}S$.
 
 \begin{definition}\cite[Def.\thinspace 3.4]{KKP}\label{ipq}
 \begin{enumerate}
 		\item
 	Assume that the horizontal divisor $D^h\subset Z$ is empty, i.e. assume that the map $w\colon Y\to \mathbb C$ is proper. Then
 		\emph{the Landau--Ginzburg Hodge numbers}~$i^{p,q}(Y,w)$ are defined as follows:
 		$$
 		i^{p,q}(Y,w)=\sum _{\lambda \in \mathbb C}\sum _ki^{p,q+k}\mathbb H ^{p+q-1}(Y_\lambda ,
 		\phi _{w-\lambda}\mathbb C _Y).
 		$$
 		
 		\item
 		In the general case denote by $j\colon Y\hookrightarrow Z$ the open embedding and define
 		similarly
 		$$
 		i^{p,q}(Y,w)=\sum _{\lambda \in \mathbb C}\sum _ki^{p,q+k}\mathbb H ^{p+q-1}(Y_\lambda ,
 		\phi _{w-\lambda}{\bf R}j_{*}\mathbb C _Y).
 		$$
 	\end{enumerate}
 \end{definition}

\section{Lie theoretical compactification }\label{Liecompact}
Let $\mathfrak{g}$ be a noncompact semisimple  Lie algebra (not necessarily complex) with group $G$.
A compactification of $\mathcal O(H_0)$ to a product of flags $ F_{\Theta } \times F%
_{\Theta ^{\ast }}$ is described in 
\cite[Sec.\thinspace 3]{GGSM2}. We now describe the orbits of the diagonal action of $G$ in this product. 
For the case considered here, the one of minimal orbits, we will have $ F_{\Theta } = \mathbb P^n\simeq
Gr(n,n+1) = F_{\Theta ^{\ast }}$, see example \ref{productPn}.

Let $\Sigma $ be a system of simple roots of 
$\left( \mathfrak{g},\mathfrak{a}\right) $ (where $%
\mathfrak{g}=\mathfrak{k}\oplus \mathfrak{a}\oplus \mathfrak{n}^{+}$ is an 
Iwasawa decomposition) and  $\Theta \subset \Sigma $ a subset of roots, cf. example \ref{roots}. Choose $%
H_{\Theta }$ defined by $\Theta =\{ \alpha \in \Sigma :\alpha \left(
H_{\Theta }\right) =0\}$, then set 
\begin{equation*}
	\mathfrak{n}_{H_{\Theta }}^{+}=\sum_{\alpha \left( H_{\Theta }\right) >0}%
	\mathfrak{g}_{\alpha } \mathrm{,}\qquad\qquad \mathfrak{n}_{H_{\Theta
	}}^{-}=\sum_{\alpha \left( H_{\Theta }\right) <0}\mathfrak{g}_{\alpha }
\end{equation*}%
and take the parabolic subalgebra
\begin{equation*}
	\mathfrak{p}_{\Theta }=\sum_{\lambda \geq 0}\mathfrak{g}_{\lambda }=%
	\mathfrak{z}_{\Theta }\oplus \mathfrak{n}_{H_{\Theta }}^{+}
\end{equation*}%
where $\lambda $ varies over the eigenvalues of  $\mathrm{ad}\left( H_{\Theta
}\right) $ and $\mathfrak{z}_{\Theta }$ is the centralizer of  $H_{\Theta }$%
. The dual of $\Theta$  is  by definition $$\Theta ^{\ast }:=-w_{0}\left( \Theta \right) \subset \Sigma $$
where $w_{0}$ is the main involution of the  Weyl group $\mathcal{W%
}$. Set%
\begin{equation*}
	\mathfrak{q}_{\Theta ^{\ast }}=\sum_{\lambda \leq 0}\mathfrak{g}_{\lambda }=%
	\mathfrak{z}_{\Theta }\oplus \mathfrak{n}_{H_{\Theta }}^{-},
\end{equation*}%
the   parabolic subalgebra of $\mathfrak g$ conjugate to $%
\mathfrak{p}_{\Theta ^{\ast }}$. In fact, $\mathfrak{q}_{\Theta ^{\ast }}=%
\mathrm{Ad}\left( \overline{w}_{0}\right) \left( \mathfrak{p}_{\Theta ^{\ast
}}\right) $ where $\overline{w}_{0}$ is a representative of the main involution
$w_{0}$ in $\mathrm{Norm}_{G}\left( \mathfrak{a}%
\right) $, and this is precisely the reason to consider here the dual flag  $F_{\Theta ^{\ast }}$.

The parabolic subgroups  $P_{\Theta }$ and $P_{\Theta ^{\ast }}$ are the 
normalizers of  $\mathfrak{p}_{\Theta }$ and $\mathfrak{p}_{\Theta ^{\ast
}}$ respectively. Their flags are $F_{\Theta }=G/P_{\Theta }$
and $F_{\Theta ^{\ast }}=G/P_{\Theta ^{\ast }}$.

Denote by $b_{\Theta }=1\cdot P_{\Theta }$ the origin of  $F_{\Theta
}=G/P_{\Theta }$ and  by $b_{\Theta ^{\ast }}=1\cdot P_{\Theta ^{\ast }}$ the origin of 
$F_{\Theta ^{\ast }}=G/P_{\Theta ^{\ast }}$. If $w\in \mathcal{W}$
then $wb_{\Theta }$ (respectively $wb_{\Theta ^{\ast }}$) denotes the 
image of  $b_{\Theta }$  by $w$ (actually, the image $\overline{w}b_{\Theta
}$ of any representative  $\overline{w}\in \mathrm{Norm}_{G}\left( 
\mathfrak{a}\right) $ of $w$).

The diagonal action is given by $g\left( x,y\right) =\left(
gx,gy\right) $, $g\in G$, $x\in F_{\Theta }$ and $y\in F%
_{\Theta ^{\ast }}$.
The following statements describe the diagonal action and its properties.

\begin{enumerate}
	\item Orbits of the diagonal action have the form $G\cdot
	\left( b_{\Theta },wb_{\Theta ^{\ast }}\right) $ with $w\in \mathcal{W}$.
	
	In fact, given $\left( x,y\right) \in F_{\Theta }\times F%
	_{\Theta ^{\ast }}$ there exists $g\in G$ such that $x=gb_{\Theta }$. Therefore, $%
	\left( x,y\right) $ is in the orbit of  $\left( b_{\Theta },z\right) $
	for some $z\in F_{\Theta ^{\ast }}$.
	
	On the other hand $F_{\Theta ^{\ast }}$ is the union of  orbits 
	$N^{+}\cdot wb_{\Theta ^{\ast }}$, $w\in \mathcal{W}$. Thus, $z\in
	N^{+}\cdot wb_{\Theta ^{\ast }}\subset P_{\Theta }\cdot wb_{\Theta ^{\ast }}$
	for some $w\in \mathcal{W}$. This shows that any 
	$\left( x,y\right) $ belongs to an orbit $G\cdot \left( b_{\Theta },wb_{\Theta ^{\ast }}\right) $
	for some $w\in \mathcal{W}$.
	
	Note that  for different  $w\in \mathcal{W}$ it might happen that the
	orbits $G\cdot \left( b_{\Theta },wb_{\Theta ^{\ast }}\right) $
	coincide.
	
	\item Dualizing, it follows that  orbits of the diagonal action are of the form
	$G\cdot \left( wb_{\Theta },b_{\Theta ^{\ast }}\right) $ with $w\in 
	\mathcal{W}$. The two descriptions are equivalent, since $\left(
	wb_{\Theta },b_{\Theta ^{\ast }}\right) $ and $\left( b_{\Theta
	},w^{-1}b_{\Theta ^{\ast }}\right) $ belong to the same orbit.
	
	\item The two previous items show that the diagonal action has only
	a finite number of orbits.
	
	The orbits $G\cdot \left( b_{\Theta },wb_{\Theta ^{\ast
	}}\right) $ are in bijection with the orbits $P_{\Theta
	}\cdot wb_{\Theta ^{\ast }}$, which are all the orbits of  $P_{\Theta
	}$ in $F_{\Theta ^{\ast }}$. They are also in bijection with 
	the orbits $P_{\Theta ^{\ast }}\cdot wb_{\Theta }$ which are
	the orbits of  $P_{\Theta ^{\ast }}$ in $F_{\Theta }$. In fact, if
	$\left( b_{\Theta },wb_{\Theta ^{\ast }}\right) $ and $\left( b_{\Theta
	},w_{1}b_{\Theta ^{\ast }}\right) $ belong to the same orbit, then there
	exists $g\in G$ such that $g\left( b_{\Theta },wb_{\Theta ^{\ast }}\right)
	=\left( b_{\Theta },w_{1}b_{\Theta ^{\ast }}\right) $. This means that $%
	gb_{\Theta }=b_{\Theta }$, that is, $g\in P_{\Theta }$. Consequently $%
	wb_{\Theta ^{\ast }}=gw_{1}b_{\Theta ^{\ast }}$ with $g\in P_{\Theta }$, so
	 $wb_{\Theta ^{\ast }}$ and $w_{1}b_{\Theta ^{\ast }}$ belong to the 
	same orbit of $P_{\Theta }$. Reciprocally, if $wb_{\Theta ^{\ast }}$
	and $w_{1}b_{\Theta ^{\ast }}$ are in the same orbit of  $P_{\Theta }$
	then $\left( b_{\Theta },wb_{\Theta ^{\ast }}\right) $ and $\left(
	b_{\Theta },w_{1}b_{\Theta ^{\ast }}\right) $ belong to the the same orbit 
	of $G$.
	
\end{enumerate}

\begin{example}\label{productPn} If $F_{\Theta }$ is a projective space
	(real or complex) $\mathbb{P}^{n}$, then  $F_{\Theta ^{\ast }}$ is the
	Grassmannian $\mathrm{Gr}\left( n, n+1\right) $. In the language  of roots, 
	$\Theta $ is the complement of  $\{\alpha _{12}\}$ and  $\Theta ^{\ast }$
	is the complement of  $\{\alpha _{n,n+1}\}$.
	
	Taking the basis $\{e_{1},\ldots ,e_{n+1}\}$, in the canonical realization,  $%
	b_{\Theta }=\left[ e_{1}\right] $ whereas $b_{\Theta ^{\ast }}=\left[
	e_{1},\ldots ,e_{n}\right] $. An element  $w\in \mathcal{W}$ is a 
	permutation, so that $wb_{\Theta }$ (respectively $wb_{\Theta
		^{\ast }}$) is obtained from  $b_{\Theta }$ (respectively $b_{\Theta ^{\ast
	}}$) by permutation of the indices. For instance, $w_{0}\left[ e_{1}%
	\right] =\left[ e_{n+1}\right] $ and $w_{0}b_{\Theta ^{\ast }}=\left[
	e_{2},\ldots ,e_{n+1}\right] $ since $w_{0}$ inverts the order of the indices.
	
	In this case $P_{\Theta }$ is the group of $(n+1)\times (n+1)$ matrices of type 
	\begin{equation*}
		\left( 
		\begin{array}{cc}
			\ast _{1\times 1} & \ast  \\ 
			0 & \ast _{n\times n}%
		\end{array}%
		\right) .
	\end{equation*}%
	It has two orbits in  $\mathrm{Gr}\left(n, n+1\right) $. They 
	are:
	
	\begin{enumerate}
		\item The hyperplanes containing  $\left[ e_{1}\right] $, that is, the 
		orbit of $b_{\Theta ^{\ast }}=\left[ e_{1},\ldots ,e_{n}\right] $. 
		In fact, such a hyperplane is determined by its intersection with
		$\left[ e_{2},\ldots ,e_{n+1}\right] $ and the subgroup of matrices 
		\begin{equation*}
			\left( 
			\begin{array}{cc}
				\ast _{1\times 1} & 0 \\ 
				0 & \ast _{n\times n}%
			\end{array}%
			\right) 
		\end{equation*}%
		is   transitive already  in the  Grassmannian of subspaces of  $\dim =n-1$ in 
		$\left[ e_{2},\ldots ,e_{n+1}\right] $.
		
		\item The hyperplanes transversal to   $\left[ e_{1}\right] $, that is,  
		the orbit of $w_{0}b_{\Theta ^{\ast }}=\left[ e_{2},\ldots ,e_{n+1}\right] $.
		In fact, if $V$ is a hyperplane transversal to  $\left[ e_{1}\right] $ 
		then the matrix $g\in P_{\Theta }$ whose columns from  $2$ to  $n+1$ are
		the coordinates of a basis $\{v_{2},\ldots ,v_{n+1}\}$ of $V$ satisfies $g\left[
		e_{2},\ldots ,e_{n+1}\right] =V$. 
	\end{enumerate}
	
	In conclusion, the diagonal action of  $\mathrm{SL}\left( n+1,\ast \right) $ in $%
	\mathbb{P}^{n}\times \mathrm{Gr}\left(n, n+1\right) $ has two orbits, 
	an open one and a closed one. The open orbit is isomorphic to the adjoint orbit
	$\mathrm{Ad}\left( G\right) H_{\Theta }$ with $H_{\Theta }=%
	\Diag(n,-1,\ldots ,-1)$ and is formed by the pairs of transversal elements in
	$\mathbb{P}^{n}\times \mathrm{Gr}\left(n, n+1\right) $.
	On the other hand, the closed orbit is isomorphic to the flag $F_{\Theta \cap
		\Theta ^{\ast }}$. Since $\Theta \cap \Theta ^{\ast }$ is the complement of 
	$\{\alpha _{12},\alpha _{n,n+1}\}$ it follows that $F_{\Theta \cap
		\Theta ^{\ast }}=F\left( 1,n\right) $. 
\end{example}

\begin{example}\label{roots}Consider the Grassmanians 
 $F_{\Theta }=\mathrm{Gr}\left(k, n+1\right) 
	$  and $F_{\Theta ^{\ast }}=\mathrm{Gr}\left(n+1-k, n+1\right) $, real
	or complex (it is preferable to assume $k<(n+1)/2$). Here, in terms of roots,  $\Theta $ 
	is the complement of  $\{\alpha _{12},\ldots ,\alpha _{k-1,k}\}$ while $\Theta
	^{\ast }$ is the complement of  $\{\alpha _{k,k+1},\ldots ,\alpha _{n,n+1}\}$.
	In $\mathrm{Gr}\left(k, n+1 \right) \times \mathrm{Gr}\left(n+1-k, n+1\right) $
	there exist $k+1$ orbits of the diagonal action determined by the pairs 
	$\left( V,W\right) \in \mathrm{Gr}\left(k,
	n+1\right) \times \mathrm{Gr}\left(n+1-k, n+1\right) $ such that  $\dim \left(
	V\cap W\right) =0,1,\ldots ,k$. The orbit determined by  $\dim \left( V\cap
	W\right) =0$ is the open orbit (transversal pairs) whereas the closed orbit
	is given by  $\dim \left( V\cap W\right) =k$, that is
	$V\subset W$. This closed orbit is the  flag $F\left(
	k,n+1-k\right) $ given by  $F_{\Theta }$ with $\Theta $ the
	complement of  $\{\alpha_{k,k+1},\alpha _{n+1-k,n+1-k+1}\}$.
\end{example}

\begin{example} The choice $F_{\Theta }=F$  gives the maximal flag,
which is 
	self-dual (for any group). The orbits of  $P_{\Theta }=P$
	are the same as the orbit of  $N^{+}$ which give the 
	Bruhat decomposition. In this case the closed orbit is  $%
	F$ itself.  
\end{example}

\section{Partial extension of  the potential}
\label{LGn}
Now we wish to describe how to extend the potential of our  LG-models to their compactifications. Let $H= 
\mathrm{Diag}(\lambda_1, \dots, \lambda_{n+1})\in\mathfrak{sl}(n+1,\mathbb{C})$ be a 
regular element so that $\lambda_i$ are all distinct. 
We consider the Landau--Ginzburg models
$(\mathcal O_n, f_H)$  where $\mathcal O_n$ is the adjoint orbit of $H_0=\Diag(n,-1,\ldots,-1)$ as in definition \ref{orbit} and $f_H$ is the height function on the orbit as described in \cite[Thm.\thinspace 2.2]{GGSM1},
obtained by pairing with $H$ via the Cartan--Killing form.

\begin{notation}\label{lgn}
	We denote by $\mathrm{LG}_n$ the Landau--Ginzburg model $(\mathcal O_n, f_H)$.
\end{notation}

We are looking for a tame compactification 
$\overline{\mathrm{LG}}_n=(Z,w)$ such that $Z\setminus D= \mathcal O_n$ for some divisor $D$ 
together with 
a holomorphic  extension $w$ of the potential 
$f_H$. 
In this section, we accomplish an intermediate step of the construction, namely, that of 
describing an extension of $f_H$ to 
a rational map $R_H$  defined 
in codimension 2
on the compactification $\mathbb P^n\times \mathbb P^n$ 
as in example \ref{productPn}. We also verify that the critical points of $R_H$ coincide with 
those of $f_H$ outside of the indeterminacy locus $\mathcal I$ (definition \ref{ilocus}), once this is done we can then 
obtain a holomorphic extension after blowing up $\mathcal I$, which we will do in section \ref{holext}. 

In the case of $\mathrm{LG}_n$
our rational map is described in \cite[Sec.\thinspace7]{BGGSM} as 
\begin{equation}
	R_H:\mathbb{P}^{n}\times Gr(n, n+1)\rightarrow \mathbb{P}^1, 
\end{equation} 
\begin{equation}
	R_H([v],[\varepsilon]) = \frac{\tr((v\otimes \varepsilon) \rho(H))}{\tr(v\otimes \varepsilon)}=\frac{\sum_{i=1}^{n+1}{\lambda_i a_{i1} (\adj g)_{1i} }}{\sum_{i=1}^{n+1}{ a_{i1} (\adj g)_{1i} }}.
\end{equation}
On $\mathbb P^{n}\times \mathbb P^{n}$ with bihomogeneous coordinates $x_1,\dots ,x_{n+1},y_1,\dots ,y_{n+1}$:
$$R_H([x_1,\dots,x_{n+1}], [y_1,\dots,y_{n+1}]) = \left[\sum_{i=1}^{n+1}\lambda_i x_iy_i, \sum_{i=1}^{n+1} x_iy_i\right].$$
For the general case, we have the following definitions:

\begin{definition}
	\label{flag1n}
	The flag variety $\mathbb F(1,n)$ is the homogeneous manifold consisting of pairs $(l,\pi)$ of lines contained in 
	an $n$-dimensional subspaces of $\mathbb C^{n+1}$.
$\mathbb F(1,n)$ can be obtained as 
the subvariety of $\mathbb P^{n}\times \mathbb P^{n}$ defined by: 
$x_1y_1+\cdots +x_{n+1}y_{n+1} =0.$
 \end{definition}

\begin{notation}
	\label{ilocus}
	The indeterminacy locus $\mathcal{I}$ of the rational map  $R_H$ is given by:
	$$\mathcal  I:= \mathbb F(1,n)\cap \{\lambda _1x_1y_1+\cdots +\lambda _{n+1}x_{n+1}y_{n+1}=0\}.$$
\end{notation}

\begin{lemma} \label{smoothn} $\mathcal I$ is smooth and $R_H$ has no critical points on $\mathbb F(1,n) \setminus \mathcal I $. 
\end{lemma}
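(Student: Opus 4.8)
The plan is to reduce both assertions to a single Jacobian computation. Write $Q_0=\sum_i x_iy_i$ and $Q_1=\sum_i\lambda_i x_iy_i$ for the two bihomogeneous forms of bidegree $(1,1)$ appearing in the construction, whose associated diagonal matrices are $I$ and $\Lambda=\Diag(\lambda_1,\dots,\lambda_{n+1})$; thus $F(1,n)=V(Q_0)$ and $\mathcal I=V(Q_0)\cap V(Q_1)$. First I would record the elementary Jacobian criterion for $(1,1)$-divisors: for any matrix $A$ the divisor $D_A=\{x^{\top}Ay=0\}\subset\mathbb P^n\times\mathbb P^n$ is singular at a point $([x],[y])\in D_A$ if and only if $Ay=0$ and $A^{\top}x=0$. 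This is a one-line computation: it suffices (the statement being about arbitrary $A$) to treat $[x]=[y]=[e_1]$ in the affine charts $x_1=y_1=1$, where $x^{\top}Ay=A_{11}+\sum_{j\ge 2}A_{1j}y_j+\sum_{i\ge 2}A_{i1}x_i+(\text{higher order})$, so the differential at that point is $\sum_{j\ge 2}A_{1j}\,dy_j+\sum_{i\ge 2}A_{i1}\,dx_i$, which together with $A_{11}=0$ vanishes precisely when the first row and column of $A$ vanish. Equivalently, at a point of $V(Q_0)\cap V(Q_1)$ the differentials $dQ_0$ and $dQ_1$ are linearly dependent if and only if $d(aQ_0+bQ_1)$ vanishes there for some $(a,b)\neq(0,0)$, i.e. if and only if $(a+b\lambda_i)x_i=(a+b\lambda_i)y_i=0$ for every $i$ (since $aQ_0+bQ_1$ has the symmetric diagonal matrix $aI+b\Lambda$).

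For the smoothness of $\mathcal I$, I would argue by contradiction: suppose $p=([x],[y])\in\mathcal I$ is a point at which $dQ_0|_p$ and $dQ_1|_p$ are linearly dependent, so there is $(a,b)\neq(0,0)$ with $(a+b\lambda_i)x_i=(a+b\lambda_i)y_i=0$ for all $i$. If $b=0$ this forces $x=y=0$, which is impossible; if $b\neq 0$, then since the $\lambda_i$ are pairwise distinct there is at most one index $i_0$ with $a+b\lambda_{i_0}=0$, so $x_i=y_i=0$ for all $i\neq i_0$, and hence $[x]=[y]=[e_{i_0}]$. But $Q_0(e_{i_0},e_{i_0})=1\neq 0$, contradicting $p\in V(Q_0)$. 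Therefore $dQ_0|_p$ and $dQ_1|_p$ are linearly independent at every point of $\mathcal I$, and the Jacobian criterion shows that $\mathcal I$ is smooth (a complete intersection of codimension $2$).

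For the second assertion, let $p=([x],[y])\in F(1,n)\setminus\mathcal I$. Since $p\in V(Q_0)$ but $p\notin\mathcal I=V(Q_0)\cap V(Q_1)$, we have $Q_0(p)=0$ and $Q_1(p)\neq 0$, so $R_H(p)=[Q_1(p):0]=\infty\in\mathbb P^1$. Working in the affine chart of $\mathbb P^1$ centered at $\infty$, in which $R_H$ is represented by the rational function $Q_0/Q_1$ — a genuine holomorphic function near $p$ because $Q_1(p)\neq 0$ — we compute $d(Q_0/Q_1)|_p=\frac{1}{Q_1(p)}\,dQ_0|_p$, using $Q_0(p)=0$. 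Now $Q_0$ has invertible associated matrix $I$, so the criterion above shows both that $F(1,n)=V(Q_0)$ is smooth and that $dQ_0|_p\neq 0$; hence $dR_H|_p\neq 0$, i.e. $R_H$ submerses at $p$ and $p$ is not a critical point.

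All of the computations here are short linear algebra; I expect no serious obstacle. The one point that needs care is the bookkeeping when differentiating, since $Q_0$ and $Q_1$ are sections of $\mathcal O(1,1)$ rather than functions — one must either pass to a local trivialization or, as above, observe that the relevant object $Q_0/Q_1$ is an honest rational function and differentiate that — and, relatedly, stating the Jacobian criterion for $(1,1)$-divisors in the correct invariant form.
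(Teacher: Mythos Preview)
Your argument is correct and is essentially the paper's approach carried out in full: both come down to checking that the $2\times(2n+2)$ Jacobian of the pair $(Q_1,Q_0)$ has rank $2$ at the relevant points. The paper merely displays this matrix and asserts the rank on $F(1,n)\setminus\mathcal I$, while you supply what is left implicit there --- the use of the distinctness of the $\lambda_i$ to force rank $2$, the separate verification at points of $\mathcal I$ yielding smoothness, and the bookkeeping with affine charts on $\mathbb P^n\times\mathbb P^n$ and on $\mathbb P^1$.
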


\begin{proof}
	The Jacobian matrix of $R_H$ 
	$$J=  \left(\begin{array}{cccccc}
		\lambda_1y_1&  \dots & \lambda_{n+1}y_{n+1} &  \lambda_1x_1&  \dots & \lambda_{n+1}x_{n+1}\\
		y_1& \dots & y_{n+1} & x_1& \dots & x_{n+1}
	\end{array}\right)
	$$ 
	has  rank 2 on $\mathbb F(1,n) \setminus \mathcal I $. In fact, 
	since all $\lambda_i$ are distinct and $x_i$ and $y_i$ are coordinates in projective spaces, 
	the only points when the matrix has rank 1 are ones
when $x_i = y_j = 0$ unless $i = j = k$ for some $k$, but these do not  belong to $\mathbb F$ and 
nor to $\mathcal I$ either, showing in particular that 
$\mathcal I$ is smooth.
\end{proof}

Furthermore,  $R_H$ has  critical points on  the coordinate points $(e_i,e_i)$, that is, 
those points $[x_1,\dots,x_{n+1}], [y_1,\dots,y_{n+1}]$ with only 2 nonzero coordinates $x_i, y_j$ with $i= j$,
these lie outside $\mathbb F(1,n)$ and correspond to the critical points of $f_H$. Lemma \ref{smoothn}
does not verify points of $\mathcal I$ where the rational map $R_H$  remains  undefined 
(we resolve this issue in the following section, where such  points are blown-up).

\section{Holomorphic extension of  the potential}\label{holext}

To extend the potential as a holomorphic map we need to blow up  $ \mathbb P^{n}\times \mathbb P^{n}$ 
along $\mathcal I$. We denote the result of the blowing up by $Z$, obtained as follows:

Take $\mathbb P^1$ with  homogeneous coordinates $[t,s]$.  The pencil $\{tg+sf_H\}_{t,s\in\mathbb C}$ induces a rational map $\mathbb P^{n}\times \mathbb P^{n}\dashrightarrow \mathbb P^1$ with $\mathcal I$ as its indeterminacy locus. Call $Z$ the
closure of  the graph of this
map. 
\vspace{-2.5cm}
\begin{figure}[H]
	\centering
	\includegraphics[width=0.5\linewidth]{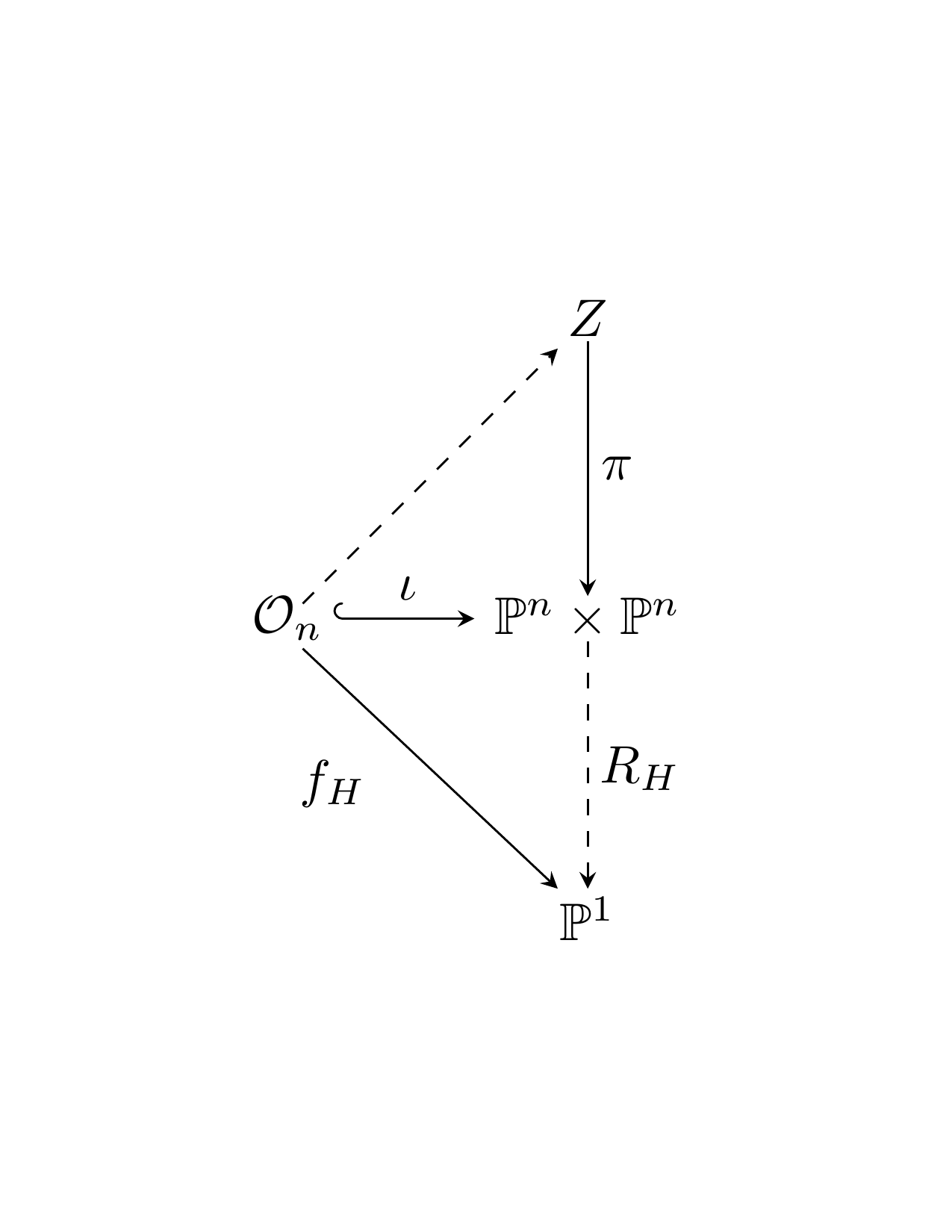}
	\label{fig:commutative-diagram-tikz-cd}
\end{figure}
\vspace{-2.5cm}
So, we get a map 
\begin{eqnarray*}
	w\colon Z \subset  \mathbb P^{n}\times \mathbb P^{n}\times \mathbb P^1 & \rightarrow &\mathbb P^1\\
	(x_1,\dots, x_{n+1}, y_1,\dots, y_{n+1}, t, s)&\mapsto&[t,s].
\end{eqnarray*}
Note that if $s\neq 0$ then 
\begin{equation} \label{onorbit} [t,s]=\left[\frac{t}{s},1\right]=\left[\frac{f_H}{g},1\right]= \left[f_H,g\right]
\end{equation}
where the middle equality holds since $tg=sf_H$, and $g=1$ over the orbit $\mathcal O_n$ 
because in fact $g=\det(A_{ij})$ with $A \in \SL(n+1, \mathbb C)$. Thus, we obtain the desired holomorphic extension:

\begin{lemma}\label{tame}
	The map $w\colon Z \rightarrow \mathbb P^1$ is a holomorphic  extension of $f_H$. The critical points of 
	$w$ coincide with the critical points of $f_H$. 
\end{lemma}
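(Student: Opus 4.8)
The plan is to exploit the realisation of $Z$ as a blow-up of $\mathbb P^n\times\mathbb P^n$ and to reduce both assertions to Lemma~\ref{smoothn} and to the computation recorded just before the statement. Write $g=\sum_i x_iy_i$ and $f_H=\sum_i\lambda_i x_iy_i$ for the two bihomogeneous forms cutting out, respectively, $F(1,n)$ and the numerator of $R_H$, so that $\mathcal I=\{g=0\}\cap\{f_H=0\}$ and $R_H=[f_H:g]$. That $w$ is holomorphic is immediate: it is the restriction to $Z\subset\mathbb P^n\times\mathbb P^n\times\mathbb P^1$ of the projection onto the last factor. Over $(\mathbb P^n\times\mathbb P^n)\setminus\mathcal I$, where $R_H$ is a morphism, $Z$ is the graph of $R_H$ and $\pi$ is an isomorphism; since $\mathcal O_n=(\mathbb P^n\times\mathbb P^n)\setminus F(1,n)$ is disjoint from $\mathcal I$, it embeds as an open subset of $Z$ disjoint from the exceptional divisor, and the identity $g\equiv 1$ on $\mathcal O_n$ recalled above gives $w|_{\mathcal O_n}=[f_H:1]=f_H$. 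This settles the first sentence.

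For the statement on critical points I would first record that $Z$ is smooth. The point is that $\{g=0\}$ and $\{f_H=0\}$ meet transversally along $\mathcal I$, i.e. the Jacobian $J$ of Lemma~\ref{smoothn} has rank $2$ at every point of $\mathcal I$; this is a one-line check, since the second row $(y_1,\dots,y_{n+1},x_1,\dots,x_{n+1})$ of $J$ never vanishes and proportionality of the two rows forces $\lambda_iy_i=cy_i$, $\lambda_ix_i=cx_i$ for all $i$, hence (the $\lambda_i$ being distinct) $[x]=[y]=[e_{i_0}]$ for a single index $i_0$, a point at which $g=1\neq0$ and so not on $\mathcal I$. The same computation shows that $J$ drops rank exactly at the $n+1$ coordinate points $([e_i],[e_i])$: this recovers Lemma~\ref{smoothn} and, since $w=f_H$ on $\mathcal O_n$, identifies $crit(f_H)=\{([e_i],[e_i])\}$. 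Consequently $\mathcal I$ is a smooth complete intersection of two divisors, $Z=\mathrm{Bl}_{\mathcal I}(\mathbb P^n\times\mathbb P^n)$ is the blow-up of a smooth variety along a smooth centre — hence smooth — and the exceptional divisor $E=\pi^{-1}(\mathcal I)$ is the projectivised normal bundle $\mathbb P(N_{\mathcal I/\mathbb P^n\times\mathbb P^n})$, a $\mathbb P^1$-bundle over $\mathcal I$.

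I would then split $Z$ along $\pi$ as $\pi^{-1}(\mathcal O_n)\sqcup\pi^{-1}(F(1,n)\setminus\mathcal I)\sqcup E$ and examine $crit(w)$ on each piece. On $\pi^{-1}(\mathcal O_n)\cong\mathcal O_n$ we have $w=f_H$, so the critical points there are exactly $crit(f_H)$. On $\pi^{-1}(F(1,n)\setminus\mathcal I)\cong F(1,n)\setminus\mathcal I$ we have $w=R_H$, which has no critical points by Lemma~\ref{smoothn}. The remaining, and main, point is that $w$ has no critical point on $E$. Unwinding the embedding $Z\hookrightarrow\mathbb P^n\times\mathbb P^n\times\mathbb P^1$ (cut out near $E$ by the relation $tg=sf_H$), the fibre of $E$ over $p\in\mathcal I$ sits inside $\{p\}\times\mathbb P^1$ as the set of limiting ratios $[(df_H)_p(v):dg_p(v)]$ with $[v]$ running over the normal directions $\mathbb P(N_{\mathcal I,p})$; by the transversality established above this parametrisation is an isomorphism $\mathbb P(N_{\mathcal I,p})\xrightarrow{\ \sim\ }\mathbb P^1$. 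Since $w$ restricted to that fibre is precisely this isomorphism, $w$ is a submersion at each point of $E$, so $crit(w)\cap E=\varnothing$, and altogether $crit(w)=crit(f_H)$.

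The step I expect to be the main obstacle is the analysis on $E$: identifying the exceptional divisor with the projectivised normal bundle and verifying that $w$ restricts to an isomorphism on each of its $\mathbb P^1$-fibres. This goes slightly beyond what Lemma~\ref{smoothn} literally provides — that lemma controls $R_H$ only on $F(1,n)\setminus\mathcal I$ — and the extra input needed is exactly the transversality of $\{g=0\}$ and $\{f_H=0\}$ along $\mathcal I$, equivalently the rank-$2$ statement for $J$ on $\mathcal I$ itself.
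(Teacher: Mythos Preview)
Your argument is correct and follows the same overall plan as the paper: both reduce the statement to showing that $w$ has no critical points on the exceptional divisor $E$, handling the complement via Lemma~\ref{smoothn} and the identification $w|_{\mathcal O_n}=f_H$. The execution on $E$ differs, however. The paper works in explicit affine coordinates near a coordinate point of $\mathcal I$, straightens $\mathcal I$ by a change of variables so that $f$ and $g$ become two of the coordinates, invokes the blow-up description from \cite[p.~603]{GH}, and computes the derivative of the extension directly; it then asserts that the other coordinate points behave the same way. Your route is coordinate-free: you identify $E$ with $\mathbb P(N_{\mathcal I/\mathbb P^n\times\mathbb P^n})$ and observe that, by the transversality of $\{g=0\}$ and $\{f_H=0\}$ along $\mathcal I$, the restriction of $w$ to each $\mathbb P^1$-fibre is the isomorphism $[v]\mapsto[(df_H)_p(v):(dg)_p(v)]$, hence $w$ is submersive on $E$. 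This is cleaner and treats all points of $E$ uniformly. You also spell out explicitly the rank-$2$ check for $J$ on $\mathcal I$ itself (not just on $F(1,n)\setminus\mathcal I$), which is what actually guarantees both the smoothness of $\mathcal I$ claimed in Lemma~\ref{smoothn} and the transversality you need; the paper's proof of Lemma~\ref{smoothn} only records rank~$2$ on $F(1,n)\setminus\mathcal I$, so your version closes that small gap.
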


\begin{proof} 
	We know by lemma \ref{smoothn} that the indeterminacy locus $\mathcal I $ of $R_H$  is smooth. 
	We want to show that $w$ has no critical points over  the exceptional divisor $E$.  
	Since surjectivity of the derivative is a local question, 
	it is enough to analyse an open neighbourhood of the point in question (the analytic topology is sufficient, 
	though the Zariski topology will work as well). 
	
	For instance if we take a point  $p$ in $E$ mapping to   the  coordinate point $P=([1,0,\ldots ,0],[0,1,\ldots ,0]) \in \mathcal I$
	by the blow down map, 
	hence  $p= ([1,0,\ldots ,0],[0,1,\ldots ,0],[t_0,s_0])$. 
	We take the open neighbourhood 
	$$U_{12} = \{x_1\neq 0, y_2\neq 0\} \subset \mathbb P^n \times \mathbb P^n$$
	of  the point $P$. 
	In this neighbourhood, the defining equations for $S=\mathcal I \cap U_{12}$ become
	$$\left\{\begin{array}{rlll} f=&  \lambda_1Y_1+\lambda_2X_2+\lambda_3X_3Y_3+\dots+\lambda_{n+1}X_{n+1}Y_{n+1}&=&0\\
		g= & Y_1+X_2+X_3Y_3+\dots+X_{n+1}Y_{n+1}&=&0
	\end{array}\right . ,$$
	where $X_i=x_i/x_1, Y_i=y_i/y_2$,  hence the local expressions for $f$ and $g$ 
	are nonsingular. 
	Since $S$ is a smooth submanifold of $U_{12}\simeq \mathbb C^{2n}$
	of codimension 2, we can change coordinates  to $\Delta =\mathbb C[Z_1,\dots,Z_{2n}]$ so that 
	$$Z_{2n-1}=f, \quad Z_{2n}=g$$ 
	and therefore $S$  is cut out inside $\Delta$  as the linear submanifold $Z_{2n-1}=Z_{2n}=0$.
	Now, following \cite[p.\thinspace 603]{GH}, we know that an extension of $w$ is defined over $\tilde\Delta$ by $(w(z), l') $
	where $$l'=\left(\frac{\partial w}{\partial Z_{2n-1}}t, \frac{\partial w}{\partial Z_{2n}} s\right).$$
	Now consider the chart on the target $\mathbb P^1$ where 
	$s\neq 0$, then  on $\tilde \Delta \setminus E$ we have 
	$w=f/g=t/s$, with $s\neq 0$ hence $g\neq 0$  thus
	$$l'=\left(\frac{\partial f/g}{\partial f} t_0, \frac{\partial f/g}{\partial g} s_0\right)=(t_0/g,-s_0f/g^2).$$   
	So, that $w\vert_S$ can be extended over $E$ to 
	$$w\vert_S(z,l')= (w(z),(t_0/g,-s_0f/g^2)$$
	without critical points.
	We conclude that $P$ is not a critical point of $w$. 
	
	Calculations at other coordinate points work similarly, and 
	since every point on $E$ belongs to an 
	open neighbourhood of some coordinate point, 
	extension of the result to more general points 
	of $E$ goes through automatically.
	In all such cases we conclude surjectivity of the derivative, hence $w$ has no critical points on $E$.
\end{proof}

\begin{remark}\label{inffib}
Here  $[t,s]$ are the coordinates of $\mathbb P^1$ and 
the fibre over infinity $Z_\infty=w^{-1}([1,0])$ happens when 
$s=0$, giving the  equation
 \begin{equation}\label{sol} t \left(\sum x_iy_i\right)  = 0.\end{equation}
The set  $V$  of solutions of (\ref{sol})
 inside $\mathbb P^n \times \mathbb P^n \times  \mathbb P^1$ consists of 2 components
 \begin{itemize}
 \item[$\iota)$]  a  flag manifold 
$ \mathbb F(1,n) $ cut out by the equation  $\sum x_iy_i = 0$ in $\mathbb  P^n \times \mathbb P^n$
 for any value of $[t,s] \in \mathbb P^1$, thus forming a component that is a 
 $\mathbb P^1$ bundle over  $ \mathbb F(1,n) $, and  
\item[$\iota\iota)$] when $t=0$, we have a second component 
$\mathbb  P^n \times \mathbb P^n \times {[0,1]}$. 
 
 These 2 components  intersect in $\mathbb F(1,n) \times {[0,1]}$. 
  \end{itemize}
 
  The fibre at infinity $Z_\infty$
of the map $w\colon Z \rightarrow \mathbb P^1$ is formed by  the solutions of (\ref{sol}) inside $Z$, 
that is, $Z_\infty=Z\cap V$.
Since  the point at infinity is $[t,s]=[1,0]$, 
we can not at the same time have that $t=0$, so that the entire second component, described in item 
$\iota\iota)$ can not belong to  $Z_\infty $.
%
 Taking  $s=0$  we obtain precisely the proper transform 
 of the blow-up of $\mathcal I$ inside $Z$, and
we conclude that $Z_\infty =w^{-1}([1,0])\simeq \mathbb F(1,n) $.
\end{remark}

\begin{lemma}
$(Z,w)$ is a weakly tame compatification of $(\mathcal O_n,f_H)$.
\end{lemma}

\begin{proof} By construction $Z$ compactifies $\mathcal O_n$ and by Lemma \ref{tame}
$w$ is an extension of $f_H$.
 Writing the  boundary divisor $D_{Z}$ as a sum of 2 two irreducible  components $D^{v}$ and $D^{h}$, 
 we have that the vertical component $D^{v}\simeq \mathbb F(1,n)$  is a smooth irreducible divisor which is the fibre of the map 
 $w$ over infinity as described in Remark \ref{inffib}.
The horizontal component $D^h$ of $D^{Z}$ is the exceptional divisor of the blow up, therefore a $\mathbb P^1$- bundle over 
$\mathcal I$, 
where $\mathcal I \subset \mathbb{P}^{n}\times \mathbb{P}^{n}$ is the indeterminacy locus of the rational map $R_H$ 
as in Definition \ref{ilocus}. 

Therefore, $D_{Z} = D^{h} +  D^{v}$ 
is a strict normal crossing divisor, such that $D^{h}$ is smooth and surjective over $\mathbb{P}^{1}$
 and  $D^{v}$ is the scheme theoretic fibre  over infinity.  
 Since $\mathcal O_n$ has trivial canonical class, one can find a unique anticanonical divisor $-K_Z$ with support $D_{Z}$. 
 Such divisor fails the condition in Definition \ref{def-3} that requires that this anticanonical divisor be equal to $D_{Z}$;
 instead,  it is a combination of $D^{h}$ and $D^{v}$  with positive coefficients: 
$$-K_{Z} =  nD^{h} + (n+1) D^{v}$$
and because of these nontrivial multiplicities we refer to our compactification as weakly tame.
\end{proof}

\begin{remark} Given our choice of smooth compactification and the fact that the fibre at infinity is smooth, 
as explained in remark \ref{inffib}, we obtain that the monodromy around infinity 
is trivial, that is, the map (\ref{monodromy-2}) is the identity. Consequently, the grading 
of the cohomologies to be considered in definition \ref{hpq} is also trivial. However, notice that 
it does not imply that all the invariants $h^{p,q}(Y,w)$ are zero. Indeed, 
       $$h^{n,n}(Y,w)=\dim \HH^{2n}(Y,Y_b) = n+1$$
       by lemma \thinspace\ref{middle}.
\end{remark}

\section{Hodge structures on  minimal adjoint orbits}
The following results will be used to 
show that the adjoint orbits have pure Hodge structures. Consider $X$ smooth projective, and $Y$ smooth projective of codimension 1 in $X$. 
To describe the Hodge structure 
on $U = X \setminus Y$, we consider the
Gysin sequence 
\begin{equation}\label{gysin}  
	\ldots\rightarrow \HH^{k-2} (Y) \rightarrow \HH^k(X) \rightarrow \HH^k(U)\rightarrow \HH^{k-1}(Y) \rightarrow \ldots .\end{equation}

It turns out that to make the  homomorphism $\HH^{k-2}(Y)\rightarrow \HH^k(X)$  a map of Hodge structures, it is sufficient 
to shift the weights up by (1,1) \cite[Sec.\thinspace 2.1]{Ho}. So we get the Gysin map
$$\delta_k\colon \HH^{k-2}(Y)\otimes \mathbb Q(-1)\rightarrow \HH^k(X)$$
which is a map of 
Hodge structures of weight $k$, and thus induces Hodge structures on $\ker \delta_k$ and $\coker \delta_k$, 
see \cite[Thm.\thinspace10]{Ho}.
Furthermore, the Hodge filtrations on $\ker \delta_{k+1}$ and $\coker \delta_k$ are the same as those induced by 
the Hodge filtration on $\HH^k(U)$ via the short exact sequence (arising from the Gysin sequence) 
$$ 0 \rightarrow \coker \delta_{k} \rightarrow \HH^k(U) \rightarrow \ker \delta_{k+1} \rightarrow 0 \textup{.}$$  

Thus,  $\HH^n(U)$ admits a natural mixed Hodge structure with weight filtration 
$$W^k\HH^n(U) = 
\left\{\begin{array}{ll}
	0 & k<n,\\
	\imagen \HH^n(X)\rightarrow \HH^n(U) & k=n,\\
	\HH^n(U) & k>n,
\end{array}
\right.$$
and Hodge filtration $F^p\HH^n(U)$ given by classes represented by $\geq $ $p$-holomorphic logarithmic differential forms such that 
$$Gr^k\HH^n(U) = 
\left\{\begin{array}{ll}
	0 & k<n, k>n+1,\\
	\coker \delta_n & k=n,\\
	\ker \delta_{n+1} & k=n+1.
\end{array}
\right.$$
where the kernel and cokernel of $\delta_k$ are given  their natural Hodge structures \cite[Cor.\thinspace 11]{Ho}.

\addtocontents{toc}{\setcounter{tocdepth}{-10}}

\subsection{The Hodge structure on $\mathcal O_n$} \label{pp1}
We consider  the case when  $U = \mathcal O_n$ is the minimal adjoint orbit  as in definition \ref{orbit}
compactified to 
$\mathbb P^n \times {\mathbb P^n}$ so that $\mathbb P^n \times {\mathbb P^n} \setminus U = \mathbb F(1,n)$ is a flag variety. 

\begin{example} \label{pure} Taking $U = \mathcal O_n$ and $X= \mathbb P^n \times \mathbb P^n$ in (\ref{gysin})
	we get  
	$$\rightarrow \HH^{k-2} (\mathbb F(1,n)) \rightarrow \HH^k(\mathbb P^n\times \mathbb P^n) \rightarrow \HH^k(\mathcal O_n)
	\rightarrow \HH^{k-1}(\mathbb F(1,n)) \rightarrow \, \text{,}$$
	and  obtain a trivial weight filtration 
	$$W^k\HH^n(\mathcal O_n) = 
	\left\{\begin{array}{ll}
		0 & k<n,\\
		\HH^n(\mathcal O_n) & k\geq n.
	\end{array}
	\right.$$
	Thus, the Hodge structures on  the minimal adjoint orbits $\mathcal O_n$ are pure.
\end{example}

We show that our $LG$ models are Hodge--Tate.
Let $Z=\mathrm{Bl}_{\mathcal I}{\mathbb P^n\times\mathbb P^n}$
as in section \ref{holext}
with $E$  the exceptional divisor. So, we have 
that $Z\backslash E=\mathbb P^n\times\mathbb P^n\backslash\mathcal I$. We  compute the Hodge numbers of $E$.

\begin{lemma}\label{exc} The Hodge numbers of the exceptional divisor $E$ are $h^{p,q}(E)=0$ unless $p=q$.
\end{lemma}
\begin{proof}
	Firstly, observe that $\mathbb F(1,n)$ is a $\mathbb P^1$-bundle over $\mathcal I$. Secondly, 
	observe that $E$ is a $\mathbb P^1$-bundle over $\mathcal I$. So we have the Hodge polynomials
	$$\alpha(E) = \alpha(\mathbb F(1,n)) .$$
	
	On the other hand $\mathbb F(1,n)$ fibres over $\mathbb P^n$ with fibre $\mathbb P^{n-1}$. 
	Thus, 
	$ \alpha(E)=(1+uv+\cdots +u^{n-1}v^{n-1} )(1+uv+\cdots+u^nv^n).$ 
	
	Thus, we conclude that $E$ has only $(p,p)$ cohomology.
\end{proof}

\begin{lemma}\label{pneqq} If $p\neq q$ then the KKP numbers of $LG_n$ vanish, that is, $f^{p,q}(Y,w)=h^{p,q}(Y,w)=i^{p,q}(Y,w)=0$.
\end{lemma}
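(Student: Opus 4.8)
The plan is to deduce the vanishing of all three invariants for $p\neq q$ from the Hodge--Tate (i.e. $(p,p)$-only) nature of the relevant cohomology groups, which we assemble from the geometric inputs already established. First I would record the three facts we have at our disposal: (1) by Example~\ref{pure} the Hodge structure on $\mathcal O_n$ is pure, and since $\mathcal O_n$ sits between $\mathbb P^n\times\mathbb P^n$ and $F(1,n)$ in the Gysin sequence \eqref{gysin}, and both of those are rational homogeneous varieties with only $(p,p)$-classes, the maps $\delta_k$ are morphisms of Hodge--Tate structures and hence $\HH^\bullet(\mathcal O_n)$ has only $(p,p)$-classes; (2) by Lemma~\ref{exc} the exceptional divisor $E$ has only $(p,p)$ cohomology; and (3) by Lemma~\ref{tame} the compactification $w\colon Z\to\mathbb P^1$ is tame with $Z\setminus D=\mathcal O_n$, where $D=E\cup(\text{strict transform of }F(1,n))\cup D^v$.

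Next I would pass from these building blocks to the relative cohomology $\HH^{\bullet}(Z,Y_b)$ carrying the nilpotent operator $N$. Using the $T$-equivariant long exact sequence relating $\HH^\bullet(Y,Y_b)$, $\HH^\bullet(Y)$ and $\HH^\bullet(Y_b)$ from Section~\ref{invariants}, together with the fact that the smooth fiber $Y_b$ is an affine piece of a hyperplane section type subvariety of $\mathbb P^n\times\mathbb P^n$ whose cohomology (by a Lefschetz/Gysin argument analogous to Example~\ref{pure}) is again Hodge--Tate, I would conclude that $\HH^\bullet(Y,Y_b)$ is a successive extension of Hodge--Tate structures, hence Hodge--Tate. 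Consequently every graded piece $gr^{W,m}_k\HH^{n+p-q}(Y,Y_b)$ in Definition~\ref{hpq} is of pure type $(r,r)$ for $r=k/2$, and is zero unless $k$ is even. Plugging in the indices: $h^{p,n-q}(Y,w)=\dim gr^{W,n\pm a}_{2(n-p)}$ or $gr^{W}_{2(n-q)}$, which — after unwinding — forces $p=q$ for nonvanishing. This gives $h^{p,q}(Y,w)=0$ for $p\neq q$. By the identities \eqref{sums} of \cite{KKP}, $\sum_{p+q=m}h^{p,q}=\sum_{p+q=m}f^{p,q}=\sum_{p+q=m}i^{p,q}$; but all three collections of numbers are nonnegative, so if the $h$-sum along an anti-diagonal $m$ equals $h^{m/2,m/2}$ (zero when $m$ is odd), the corresponding $f$- and $i$-sums are forced to be supported only at $p=q$ as well, once we also know $f^{p,q}$ and $i^{p,q}$ are themselves of Hodge--Tate origin. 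For $f^{p,q}$ this follows from the logarithmic Hodge--de Rham description: $f^{p,q}(Y,w)=\dim\HH^p(Z,\Omega^q_Z(\log D_Z,f))$ computes graded pieces of $\HH^{p+q}$ of an $f$-adapted log complex on $Z$, a Hodge--Tate variety with Hodge--Tate boundary; for $i^{p,q}$ it follows because the vanishing-cycle mixed Hodge modules are built from the Hodge--Tate fibers. So each of $f^{p,q}$, $i^{p,q}$ individually vanishes for $p\neq q$, and we are done.

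The main obstacle I anticipate is not the $h^{p,q}$ computation — that is essentially a bookkeeping consequence of purity plus Hodge--Tateness — but rather making rigorous that the $f$-adapted logarithmic complex and the vanishing-cycle modules inherit the Hodge--Tate property from the geometry. For $f^{p,q}$ one needs to know that the hypercohomology of $\Omega^\bullet_Z(\log D_Z,f)$ underlies a mixed Hodge structure whose graded pieces are pulled back from those of ordinary log-de Rham cohomology of $Z\setminus D$ (here one can cite the comparison results of \cite{KKP}, \cite{LP}, or \cite{Sh}), and for $i^{p,q}$ one needs the mixed Hodge module structure on $\phi_{w-\lambda}\mathbf R j_*\mathbb C_Y$ to be Hodge--Tate, which again reduces to the fibers of $w$ being Hodge--Tate. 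Since in our situation every space in sight — $\mathbb P^n\times\mathbb P^n$, $F(1,n)$, $E$, $\mathcal O_n$, the generic fiber $Y_b$, and all special fibers (which are degenerations of hyperplane-section-type varieties inside a product of projective spaces) — is rational with cohomology generated by algebraic cycles, the Hodge--Tate property propagates through all the functorial constructions, and the vanishing $f^{p,q}=h^{p,q}=i^{p,q}=0$ for $p\neq q$ follows. I would phrase the final argument as: all three invariants are, by their definitions, dimensions of $(p,q)$-Hodge components of mixed Hodge structures that are successive extensions of Tate twists $\mathbb Q(-r)$; a Tate twist $\mathbb Q(-r)$ has $(p,q)$-component zero unless $p=q=r$; hence each invariant vanishes whenever $p\neq q$.
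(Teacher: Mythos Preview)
Your argument for $h^{p,q}(Y,w)$ contains a genuine conceptual gap. The numbers $h^{p,q}$ in Definition~\ref{hpq} are \emph{not} Hodge numbers of a mixed Hodge structure: they are dimensions of graded pieces of the \emph{monodromy} weight filtration $W_\bullet(N,m)$ determined by the nilpotent operator $N=\log T$ acting on $\HH^{\bullet}(Y,Y_b)$. You assert that, because $\HH^\bullet(Y,Y_b)$ is a Hodge--Tate mixed Hodge structure, each $gr^{W,m}_k$ is pure of type $(k/2,k/2)$ and hence vanishes unless $k$ is even; but the filtration $W_\bullet(N,m)$ is defined purely from $N$ and is \emph{a priori} unrelated to the Deligne weight filtration of the MHS. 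Even granting your evenness claim, the index $2(n-p)$ appearing in Definition~\ref{hpq} is already even, so nothing forces $p=q$. To control $h^{p,q}$ you must control $N$ itself, and nothing in your Hodge--Tate bookkeeping does this.

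The paper proceeds differently on exactly this point: it uses Lemma~\ref{tame} (no critical points of $w$ over the exceptional locus, so the fiber at infinity is as mild as possible) to conclude that the monodromy at infinity is trivial, hence $N=0$. With $N=0$ the monodromy weight filtration centered at $m$ has a single nonzero graded piece $gr^{W,m}_m$, and unwinding the indices in Definition~\ref{hpq} then forces $P=Q$ for $h^{P,Q}\neq 0$. For $f^{p,q}$ the paper does not argue via Hodge--Tateness of the $f$-adapted log complex; it invokes Harder's comparison theorem \cite[Thm.~3]{Ha} to obtain $f^{p,q}=h^{p,q}$ directly. For $i^{p,q}$ the paper again avoids the abstract Hodge--Tate route: it works locally at each of the $n+1$ Lefschetz critical points, writes $w$ as $z_1^2+\cdots+z_{2n}^2$, reads off the Dehn-twist monodromy $T=\begin{pmatrix}1&1\\0&1\end{pmatrix}$ with $N^2=0$, and concludes that each critical value contributes $+1$ to $i^{n,n}$; combined with $\sum_{p+q=m} i^{p,q}=h^m(Y,Y_b)=0$ for $m\neq 2n$ this pins down all the $i^{p,q}$. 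Your sketch of propagating Hodge--Tateness through vanishing-cycle modules and $f$-adapted complexes could perhaps be made to work, but as written it is programmatic rather than a proof, and for $h^{p,q}$ it is aimed at the wrong filtration.
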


\begin{proof}
	Example \ref{pure} describes the trivial weight filtration on the orbit $Y=\mathcal O_n$ 
	showing that the Hodge structure of $\mathcal O_n$ is pure. 
	By  lemma \ref{exc} we have that the Hodge numbers of 
	$E$ vanish for $p\neq q$,  and similarly for the compactification $Z$ of $\mathcal O_n $ obtained by blowing up 
	$\mathbb P^n \times \mathbb P^n$ at $\mathcal I$. Therefore,  the same is also true for orbit $\mathcal O_n$. 
	Lemma \ref{tame} shows that the compactified Landau--Ginzburg model  
	$w\colon Z \rightarrow \mathbb P^1$ has no critical points  over $E$, implying in particular that is does not acquire extra critical points at infinity, see remark \ref{inffib}.
	It then follows that the logarithm of the monodromy operator at infinity $N$ is trivial for all cohomology groups, 
	forcing the first Hodge theoretical invariants, $h^{p,q}(Y,w)$, to vanish when $p\neq q$.  
	
	The numbers $f^{p,q}(Y,w)$ can be computed from the dimensions of the Hodge graded pieces
	of the canonical  mixed 
	Hodge structure on the relative cohomology $\HH^m(Y,Y_b)$ using 
	results from \cite{Ha} or \cite{Sh}. For the cases we consider here, 
	the equality $f^{p,q} (Y,w)= h^{p,q} (Y,w)$ 
	follows from \cite[Thm.\thinspace 3.1]{Ha},
	which shows that the $f^{p,q}$ Hodge numbers are computed 
	from the dimensions of the Hodge graded pieces of the pair $(Y,w)$,
	implying that $f^{p,q} (Y,w)= h^{p,q}(Y,w) $.

	 Since  our compactified   Lefschetz fibration $w \colon Z \rightarrow \mathbb P^1$ has no critical points at infinity, 
it 	has the same critical points as $f$, see lemma \ref{tame}. Consequently, by standard results for  vanishing cycles in 
	Lefschetz theory, we know that there exist $n+1$ vanishing cycles in middle cohomology for $w$, 
	one for each of its critical points. Furthermore, we also know that  the relative cohomologies  $\HH^m(Y,Y_b;\mathbb C)$ vanish for $m\neq \frac{1}{2}\dim Y$, and 
	$\HH^{2n}(Y,Y_b;\mathbb C) = n+1$ where $2n$ equals  half the real  dimension of $Z$ (see lemma \ref{middle} for an alternative proof).
	To complete the proof showing that the third invariant, 
	$i^{p,q}(Y,w)=0$  unless  $p=q=n$, we work in the opposite direction.  Katzarkov--Kontsevich--Pantev \cite{KKP} proved the equality  $$\HH^{m}(Y,Y_b;\mathbb C) =\sum_{p+q=m} i^{p,q}(Y,w) .$$
	Therefore, it remains  to verify that
	$i^{n,n}(Y,w)=n+1$.
	Let $\lambda_i$ be a critical value of $w$. Since $w$ is a Lefschetz fibration and $\dim Z=2n$, the potential
	$w$ can be written in the form $z_1^{2}+ \dots +z_{2n}^2$ around the critical point.
	Hence, the monodromy around the singular fibre $Y_{\lambda_i}$ is a Dehn twist  with local monodromy given by 
	a matrix $T= \begin{pmatrix}1 & 1 \\ 0 & 1\end{pmatrix}$ applied to local coordinates of $\mathbb C^n \oplus \mathbb C^n$. 
	Consequently,  $N= \log T =\begin{pmatrix}0 & 1 \\ 0 & 0\end{pmatrix} $ and $N^2=0$. 
	It then follows that the weight filtration corresponding to this monodromy around  $Y_{\lambda_i}$  has only one step, and the critical point with critical value $\lambda_i$ 
	contributes with  a $+1$ towards  the value of $i^{n,n}(Y,w)$.  Summing over all critical points we then get $i^{n,n}(Y,w)=n+1$. 
\end{proof}
\addtocontents{toc}{\setcounter{tocdepth}{1}} 
\begin{lemma}\label{middle}
	$ \displaystyle h^{2k}(Y,Y_b;\mathbb C)= \left\{ \begin{array}{ll} k+1 &  \textup{if } k=n,\\  
		0 & \textup{otherwise}. \end{array}\right. $
\end{lemma}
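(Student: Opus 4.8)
The plan is to compute the relative cohomology $\HH^\bullet(Y,Y_b;\mathbb C)$ directly from the Lefschetz structure of $f_H$, using only elementary algebraic topology and none of the mixed Hodge machinery; this is the promised alternative to the vanishing-cycle computation appearing inside the proof of Lemma~\ref{pneqq}.

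First I would assemble the geometric inputs. By \cite[Thm.\thinspace 2.2]{GGSM1} the height function $f_H\colon\mathcal O_n\to\mathbb C$ is a symplectic Lefschetz fibration, and by Lemma~\ref{tame} it extends to the tame compactification $w\colon Z\to\mathbb P^1$ with the same critical points and none at infinity; in particular $f_H$ is a locally trivial fibre bundle over the complement of its finite set of critical values, so $\mathcal O_n$ deformation retracts, rel $Y_b$, onto $f_H^{-1}(D)$ for a closed disk $D\subset\mathbb C$ containing all critical values, with base point $b\in\partial D$ (any two regular choices of $b$ give diffeomorphic pairs $(\mathcal O_n,Y_b)$ by parallel transport along a path avoiding the critical values). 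Next, $df_H$ vanishes at $X\in\mathcal O_n$ iff $[H,X]=0$, and since $H$ is regular this forces $X\in\mathfrak h$; hence the critical points of $f_H$ are exactly the points of the Weyl orbit $\mathcal W\cdot H_0$, and as the $\mathcal W\cong S_{n+1}$-stabilizer of $H_0=\Diag(n,-1,\dots,-1)$ is $S_1\times S_n$ there are precisely $n+1$ of them (consistent with $\chi(\mathbb P^n)=n+1$ and $\mathcal O_n\simeq T^*\mathbb P^n$). Finally, $\dim_{\mathbb C}\mathcal O_n=2n$.

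Then I would invoke the standard handle picture of a Lefschetz fibration over a disk: choosing a distinguished system of vanishing paths from $b$ to the $n+1$ critical values, $f_H^{-1}(D)$ is obtained from $f_H^{-1}(\text{a small disk around }b)\cong Y_b\times(\text{disk})\simeq Y_b$ by attaching one Lefschetz thimble per critical point, each thimble being a cell of real dimension $\dim_{\mathbb C}\mathcal O_n=2n$ glued along its vanishing sphere in $Y_b$. Thus $(\mathcal O_n,Y_b)$ has the homotopy type of a relative CW pair whose cells are exactly these $n+1$ cells, all of dimension $2n$. Therefore $\HH_k(\mathcal O_n,Y_b;\mathbb Z)$ is free of rank $n+1$ for $k=2n$ and vanishes otherwise, and by the universal coefficient theorem $\HH^m(Y,Y_b;\mathbb C)$ has dimension $n+1$ for $m=2n$ and is $0$ for $m\neq 2n$. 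Setting $m=2k$, this is exactly the claimed formula: $k+1$ when $k=n$, and $0$ otherwise.

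The one point requiring care is that the $n+1$ thimbles contribute $n+1$ independent relative classes with no relations, i.e. that the relative (co)homology is concentrated in degree $2n$; but this is precisely the content of the handle decomposition, since the thimbles are literally the cells adjoined to $Y_b$, so there are no cells in any other dimension and no boundary maps to account for. The auxiliary reductions — retracting $f_H^{-1}(\mathbb C)$ onto $f_H^{-1}(D)$ and the independence of the choice of regular $b$ — are routine consequences of the local triviality of $f_H$ away from its critical values, itself guaranteed by the tameness established in Lemma~\ref{tame}.
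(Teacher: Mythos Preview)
Your argument is correct, but it is not the route the paper takes. The paper's proof of Lemma~\ref{middle} first identifies the homotopy types of $Y$ and $Y_b$ separately --- citing \cite[Thm.\thinspace 2.1]{GGSM2} for $Y\sim\mathbb P^n$ and \cite[Cor.\thinspace 3.4]{GGSM1} for $Y_b\sim\mathbb P^n\setminus\{P_1,\ldots,P_{n+1}\}$ --- then computes $\HH^\bullet(Y_b;\mathbb C)$ via Mayer--Vietoris and reads off $\HH^\bullet(Y,Y_b;\mathbb C)$ from the long exact sequence of the pair. You bypass the absolute cohomologies entirely and obtain the relative groups directly from the Lefschetz handle picture: $f_H^{-1}(D)$ is built from $Y_b$ by attaching $n+1$ thimbles of real dimension $2n$, so the relative cellular complex is concentrated in a single degree. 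Your route is cleaner and needs no external input about the homotopy type of the fibre; however, it is essentially the Picard--Lefschetz/vanishing-cycle argument already invoked inside the proof of Lemma~\ref{pneqq}, which is precisely the computation that Lemma~\ref{middle} is billed as an \emph{alternative} to. So while what you wrote is a valid proof of the statement, it fleshes out the primary method rather than supplying the independent second argument the paper intends here.
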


\begin{proof}  Using  \cite[Thm.\thinspace 2.1]{GGSM2}, we have that 
	$Y= \mathcal O_n \sim \mathbb P^n $ ($\sim$ denotes homotopy equivalence) and  using \cite[Cor.\thinspace 3.4]{GGSM1} and the fact that $f_H$ has $n+1$ critical points, we get that 
	$Y_b \sim \mathbb P^n \setminus \{P_1,\ldots, P_{n+1}\}$ where $P_1,\ldots, P_{n+1}$ are  points in 
	$\mathbb P^n$. 
	Using the Mayer--Vietoris sequence we obtain
	$$
	\HH^i(Y_b;\mathbb{C})=\begin{cases}
		\mathbb{C}&   i \textup{ even, }i<2n-1,\\
		\mathbb C^{\oplus n+1}& i=2n-1,\\
		0&  \textup{otherwise}.
	\end{cases}$$
	Then, the long exact sequence of the pair
	$$ \ldots \rightarrow \HH^n(Y,Y_b;\mathbb{C})\stackrel{j^\ast}{\rightarrow} \HH^n(Y;\mathbb{C})\stackrel{i^\ast}{\rightarrow} \HH^n(Y_b;\mathbb{C})\stackrel{\delta}{\rightarrow}\HH^{n+1}(Y,Y_b;\mathbb{C})\rightarrow \ldots $$
	gives
	$$\HH^i(Y,Y_b;\mathbb{C})=\begin{cases}
		\mathbb C^{\oplus n+1}& i=2n,\\
		0&  \textup{otherwise.}
	\end{cases}$$
\end{proof}

\begin{theorem}\label{main}
	The LG model $(\mathcal O_n,f_H)$ admits a weakly tame compactification and satisfies the KKP conjecture.
\end{theorem}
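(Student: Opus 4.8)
The proof of Theorem~\ref{main} assembles the pieces already established. First I would invoke Lemma~\ref{tame}: the morphism $w\colon Z\to\mathbb P^1$ constructed by blowing up $\mathbb P^n\times\mathbb P^n$ along the smooth indeterminacy locus $\mathcal I$ is a holomorphic extension of $f_H$, the complement $Z\setminus D$ (with $D=E\cup(\text{strict transform of }F(1,n))$, an anticanonical divisor) is $\mathcal O_n$, and $w$ has no critical points over the exceptional divisor $E$; by construction $f^{-1}(\infty)$ is a reduced normal crossings divisor and $\mathrm{crit}(w)$ (which equals the finitely many coordinate points $(e_i,e_i)$, coming from $\mathrm{crit}(f_H)$) lies in the interior. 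This verifies all the conditions of Definition~\ref{def-3}, so $((Z,w),D)$ is a tame compactification of $\mathrm{LG}_n=(\mathcal O_n,f_H)$. Hence the first assertion of the theorem holds.

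For the KKP conjecture I would split into the two cases $p\neq q$ and $p=q$. The case $p\neq q$ is exactly Lemma~\ref{pneqq}: since $\mathcal O_n$, $E$, and $Z$ are all Hodge--Tate (Example~\ref{pure} and Lemma~\ref{exc}), the monodromy logarithm $N$ at infinity is trivial, the $f^{p,q}$ agree with the $h^{p,q}$ by the results of Harder/Shamoto quoted there, and the $i^{p,q}$ are pinned down by the identity $\HH^m(Y,Y_b;\mathbb C)=\sum_{p+q=m}i^{p,q}(Y,w)$ from \cite{KKP}; all three therefore vanish off the diagonal. For $p=q$, Lemma~\ref{middle} (equivalently Lemma~\ref{pneqq} combined with \eqref{sums}) gives $h^m(Y,Y_b;\mathbb C)=0$ for $m\neq 2n$ and $h^{2n}(Y,Y_b;\mathbb C)=n+1$; feeding this into the identities \eqref{sums} forces
$$
\sum_{p+q=2n}i^{p,q}=\sum_{p+q=2n}h^{p,q}=\sum_{p+q=2n}f^{p,q}=n+1,
$$
while all off-diagonal summands are zero. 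Combined with the $p\neq q$ vanishing, the only surviving Hodge number in each family sits at $(p,q)=(n,n)$, and the Lefschetz/vanishing-cycle computation in the last paragraph of the proof of Lemma~\ref{pneqq} (each of the $n+1$ critical points contributing $+1$ via the single-step weight filtration of its Dehn-twist monodromy) gives $i^{n,n}(Y,w)=n+1$, hence $h^{n,n}=f^{n,n}=i^{n,n}=n+1$ as well. Therefore $h^{p,q}(Y,w)=f^{p,q}(Y,w)=i^{p,q}(Y,w)$ for all $p,q$, which is the KKP conjecture for $\mathrm{LG}_n$.

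The only genuinely delicate point is checking that $(\mathcal O_n,f_H)$ really is of \emph{Fano type} (Definition~\ref{def-Fano-type}), since $h^{p,q}(Y,w)$ is only defined under that hypothesis; here it follows because $N$ is trivial and $\HH^{n+a}(Y,Y_b)$ is nonzero only for $a=n$, so the conditions $N^{n-|a|}\neq0$, $N^{n-|a|+1}=0$ hold vacuously/trivially in the one relevant degree. Everything else is bookkeeping with the exact sequences and the identities \eqref{sums}; no new estimate is required. I therefore expect the write-up to be short: state that Lemma~\ref{tame} furnishes the tame compactification, that Lemma~\ref{pneqq} handles $p\neq q$, and that Lemmas~\ref{pneqq} and~\ref{middle} together with \eqref{sums} pin the common value $n+1$ at $(n,n)$, completing the proof.
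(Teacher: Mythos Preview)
Your proposal is correct and follows essentially the same route as the paper: the paper's own proof simply cites Lemmas~\ref{pneqq} and~\ref{middle} together with \eqref{sums} and displays the resulting KKP diamond, with the tame compactification furnished by Lemma~\ref{tame} and the discussion in Section~\ref{holext}. Your write-up is more expansive but invokes exactly the same ingredients in the same order.

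One caveat on the extra paragraph you added: your verification of the Fano-type hypothesis (Definition~\ref{def-Fano-type}) does not go through as stated. With $\dim_{\mathbb C}Y=2n$ and the only nonzero relative cohomology sitting in degree $2n$ (so $a=0$ in the notation of that definition), the required condition reads $N^{2n}\neq 0$ on $\HH^{2n}(Y,Y_b)$; if $N$ is trivial this fails rather than holding ``vacuously/trivially.'' The paper itself does not address this point, so your argument is no less complete than the paper's, but the justification you supply for it is incorrect and should be removed or reworked.
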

\begin{proof} Lemmas \ref{pneqq} and \ref{middle} together with the equality 
	(\ref{sums})
	imply that the three invariants coincide on $\mathcal O_n$. 
	We depict their values in  the KKP diamond:
	\begin{figure}[ htb ]
		\centering
		 $\begin{array}{lcccccccccccr}
			
			& && && &0& && && &\\
			& && && && && && &\\
			&  & & & & 0 & & 0 & &  & &  &\\
			& & & & & &\tiny\vdots& & & & & & \\
			& &  & & 0 &\tiny{\cdots} &n+1&\tiny\cdots &0 & & & &\\
			&\ \ & &\ \ & &\ \ &\vdots&\ \ & &\ \ & &\ \ &\\
			&& && &0& &0& && &&\\
			& && && && && && &\\
			& && && &0& && && &
			
		\end{array}$
	\end{figure}
	
\end{proof}

\backmatter

 \bmhead{Acknowledgements}
\noindent We thank the referee for many useful suggestions.
Ballico was  partially supported by MIUR and GNSAGA of INdAM (Italy).
Gasparim was partially supported by  the Vicerrector\'ia de Investigaci\'on y Desarrollo Tecnol\'ogico de la  Universidad Cat\'olica del Norte (Chile).
Rubilar acknowledges support from  Beca Doctorado Nacional -- Folio 21170589. 
Gasparim and Rubilar were partially supported by Network NT8 of the Office of External Activities of ICTP (Italy).

\bmhead{Data availability}
Data sharing not applicable to this article as no datasets were generated or analysed during the current study.

\bibliography{sn-bibliography}

\end{document}